\newcommand{\myindent}{\hspace*{1em}}
\numberwithin{equation}{section} 
\newtheorem{Theorem}{Theorem}[section]
\newtheorem{Lemma}[Theorem]{Lemma}
\newtheorem{Proposition}[Theorem]{Proposition}
\newtheorem{Corollary}[Theorem]{Corollary}
\theoremstyle{definition}
\newtheorem{Definition}[Theorem]{Definition}
\theoremstyle{definition}
\newtheorem{Remark}[Theorem]{Remark}
\newcommand{\eps}{\varepsilon}
\newcommand{\curvatura}{\kappa}
\newcommand{\curv}{\curvatura}
\newcommand\ders{\partial_s}
\newcommand\dert{\partial_t}
\newcommand{\domgae}{\textrm{dom}(\gae)}
\newcommand{\ga}{\gamma}
\newcommand{\gae}{\gamma_\eps}
\newcommand{\gaet}{\gamma_\eps(t)}
\newcommand{\gat}{\gamma(t)}
\renewcommand{\k}         {\kappa}
\newcommand{\inidat}{\initialcurve}
\newcommand{\inidatmcf}{\initialcurve}
\newcommand{\initialcurve}{\overline \gamma}
\newcommand{\lgat}{\ell(\gat)}
\newcommand{\lga}{\ell(\gamma)}
\newcommand{\lgaet}{\ell(\gaet)}
\newcommand{\lunghezza}{1}
\newcommand{\menogradFeps}{E_\varepsilon}
\newcommand{\NN}{\mathbb N}
\newcommand\qol{{\mathfrak{q}}}
\newcommand{\R}{\mathbb R}
\newcommand{\tempo}{t^\star}
\newcommand{\Tsingmcf}{T_{\mathrm{sing}}}
\newcommand{\Tmax}{T_{\mathrm{max}}}
\newcommand{\zoga}{\Upsilon}
\newcommand{\zogae}{\Upsilon_\eps} 
\newcommand{\nada}[1]{}
\newcommand{\kgaet}{\kappa_{\gae(t)}}
\newcommand{\normgaet}{\nu_{\gae(t)}}
\newcommand{\taugaet}{\tau_{\gae(t)}}
\newcommand{\initialpoint}{P}
\newcommand{\finalpoint}{Q}
\newcommand{\finalpointeps}{Q}
\newcommand{\veltan}{\lambda_\eps}
\title{A fourth-order regularization of the curvature flow of immersed plane curves with Dirichlet boundary conditions}
\author{
		Giovanni Bellettini\footnote{ 
		 Dipartimento di Scienze Matematiche, Informatiche e Fisiche,
via delle Scienze 206, 33100 Udine UD, Italy,
and International Centre for Theoretical Physics ICTP, Mathematics Section, 34151 Trieste, Italy.
		E-mail: giovanni.bellettini@uniud.it}
	\and
	Virginia Lorenzini\footnote{
		Dipartimento di Ingegneria dell'Informazione e Scienze Matematiche, Universit\`a di Siena, 53100 Siena, Italy.
		E-mail: virginia.lorenzini@student.unisi.it
	}
    \and
	Matteo Novaga \footnote{ 
		Dipartimento di Matematica, Universit\`a di Pisa, 56127 Pisa, Italy.
		E-mail: matteo.novaga@unipi.it}
    \and
	Riccardo Scala\footnote{ 
		Dipartimento di Ingegneria dell'Informazione e Scienze Matematiche, Universit\`a di Siena, 53100 Siena, Italy.
		E-mail: riccardo.scala@unisi.it}
}
\begin{document}
\maketitle
\begin{abstract}
\noindent We consider a fourth-order regularization  of the curvature flow for an immersed plane curve with fixed boundary, 
using an elastica-type functional
depending on a small positive parameter $\eps$. 
We show that the approximating flow smoothly 
converges, as $\eps  \to 0^+$, to the curvature flow of the curve with Dirichlet boundary conditions for all times before the first singularity
of the limit flow.
\end{abstract}
\noindent {\bf Key words:}~~ Curvature flow of immersed curves, higher order regularization, Dirichlet boundary conditions

\vspace{2mm}

\noindent {\bf AMS (MOS) 2020 Subject Clas\-si\-fi\-ca\-tion:}  53E010, 35K55

\section{Introduction}

In recent years, considerable attention has been devoted to the study of geometric evolution laws for curves and surfaces, especially those governed by curvature-dependent dynamics. It is well established that singularities may develop in finite time during the evolution, a phenomenon which strongly motivates the study of the flow beyond singularities. In the case of mean curvature motion, various definitions of weak solutions have been introduced, beginning with the foundational work of Brakke \cite{Brakke}.

Following a suggestion of De Giorgi in \cite{DeGiorgi:96}, we introduce and study a fourth-order regularization of mean curvature flow using an elastica-type functional depending on a small positive parameter $\eps$, which could lead to a new definition of generalized solution. More precisely, 
given $\eps\in(0,1]$, we consider a time-dependent family of immersed plane curves 
\(\ga: \bigcup_{t\in[0,T]}(\{t\}\times[0,\ell(\gat)])\rightarrow \mathbb{R}^2\) 
of class \(H^2\cap C^2\), 
with  fixed boundary points 
$\ga(t, 0)=\initialpoint,$ 
$\ga(t, \lga)=\finalpoint$. These curves evolve by 
the gradient flow of the energy
\begin{equation}\label{eq:F_eps}
    F_\varepsilon(\gamma):=
\int_0^{\ell(\gamma)} (1+\varepsilon \kappa_\ga^2) ds,
\end{equation}
where, as usual, $ds=|\partial_x\ga|\ dx$ is the arclength element, $\kappa_{\ga}$ is the curvature of the curve and $\ell(\ga)$ its length.
 In what follows, $\partial_s=|\partial_x \ga|^{-1}\partial_x$ denotes the differentiation with respect to the arclength parameter, while \( \nu_\ga \) and \( \tau_\ga \) denote the oriented normal and tangential vectors of the curve, respectively.
Formally $\gae(t,x)$ satisfies 
\begin{equation}\label{eq:eps_dir_motion_equation}
      (\partial_t\gae)^\perp=\kappa_{\gae(t)}\nu_{\gae(t)}-\eps\Big(2\partial^2_s\kappa_{\gae(t)}+\kappa_{\gae(t)}^3\Big)\nu_{\gae(t)} 
\end{equation}
coupled with the following boundary and initial conditions:
\begin{equation}\label{eq:eps_dir_boundary_conditions}
    \begin{cases}
        \gae(t,0)=\initialpoint & \text{} \\
         \gae(t,\lgaet)=\finalpointeps  & \text{}\\
        \kappa_{\gae(t)}(0)=\kgaet(\lgaet)=0  \\
        \gae(0, \cdot) = \initialcurve(\cdot),
    \end{cases}
\end{equation}
where $\initialcurve$ is an initial regular immersion such that, for some
$\alpha \in (0,1)$,  
\begin{equation}\label{eq:inidatmcf}
\inidatmcf \in C^{4+\alpha}([0,\ell(\inidatmcf)];\R^2) \quad \textrm{with}
\quad 
\inidatmcf(0)=\initialpoint, 
\quad\inidatmcf(\ell(\inidatmcf))=\finalpoint,
\end{equation}
and 
\begin{equation}\label{eq:inidatmcf_curvature}
 \kappa_{\inidat}(0)=0=\kappa_{\inidat}(\ell(\inidat)).
\end{equation}

We point out that, also if we assume that $\overline \gamma$ is an injective curve (with no self-intersections), since the flow does not preserve embeddedness, 
each $\ga_\varepsilon$ is just an immersion (i.e. possibly self-intersecting image). Furthermore, even if $\overline \gamma$ coincides with the graph of some function $f:[a,b]\rightarrow \R$, curves $\ga_\varepsilon$ cannot be assumed to  be graphs as well, since the gradient flow of $F_\eps$ is a fourth
order parabolic system, which in general does not
preserve graphicality.

We can regard
\eqref{eq:eps_dir_motion_equation} as a perturbation of the 
curvature flow, coinciding with it when $\eps=0$. 

In \cite[Theorem 2.1]{Novaga-Okabe:13}~\cite[Theorem 4.15]{Mantegazza-Pluda_Pozzetta:04} it is proven that, 
given $\eps \in (0,1]$ and $\initialcurve$ 
a planar immersed regular
curve satisfying  \eqref{eq:inidatmcf}, \eqref{eq:inidatmcf_curvature}, then the initial boundary value problem  formed by  \eqref{eq:eps_dir_motion_equation}, \eqref{eq:eps_dir_boundary_conditions} 
admits a unique smooth solution $\gae$ defined for all times, that is 
$$
\gae\in C^{\frac{4+\alpha}{4},4+\alpha}(\bigcup_{t \in [0,+\infty)}
\left(\{t\} \times [0,\lgaet]\right); \mathbb{R}^2)\cap 
C^{\infty}(\bigcup_{t \in [\delta,+\infty)}
\left(\{t\} \times [0,\lgaet]\right); \mathbb{R}^2) \qquad\forall \delta>0.
$$
Therefore our 
focus is on the convergence to the curvature flow as $\eps\to 0^+$.

Let us state our main result.
Denote by $\ga$ the immersed curvature flow of
$\inidatmcf$ 
solution to
\begin{equation}\label{eq:initial_bdry_value_prb_intro}
    \begin{cases}
     (\partial_t\ga)^\perp=\kappa_{\gat}\nu_{\gat} \\
     \ga(t, 0)=\initialpoint, \quad \ga(t, \lgat)=\finalpoint \\
     \ga(0, \cdot) = \inidatmcf(\cdot).
    \end{cases}
\end{equation}
It is possible to show 
that \eqref{eq:initial_bdry_value_prb_intro} admits a 
unique solution in a maximal time 
interval $[0,\Tsingmcf)$, with $\Tsingmcf\in(0,+\infty]$. 
Indeed,
 given $\alpha\in(0,1)$ and $\overline{\ga}\in C^{2+\alpha}([0,\ell(\inidat)];\mathbb{R}^2)$ a 
planar immersed curve satisfying $\inidat(0)=\initialpoint,  \ \inidat(\ell(\inidat))=\finalpoint$, 
then there exists $\Tsingmcf>0$ such that 
the initial boundary value problem \eqref{eq:initial_bdry_value_prb_intro} 
admits a unique solution belonging to
$$
C^{\frac{4+2\alpha}{4},2+\alpha}(\bigcup_{t \in [0,\Tsingmcf)}
\left(\{t\} \times [0,\lgat]\right); \mathbb{R}^2)
\cap 
C^{\infty}(\bigcup_{t \in [\delta,\Tsingmcf)}
\left(\{t\} \times [0,\ell(\gat)]\right); \mathbb{R}^2) \qquad\forall \delta \in (0, \Tsingmcf)
$$
 (see  
also \cite{Huisken:98} for a related statement). In addition, one checks
that the condition \eqref{eq:inidatmcf_curvature} is preserved
during the flow.

Given an immersed curve $\ga$ 
(resp. $\ga(t)$,  $\gae$, $\gae(t)$) 
parametrized by arc-length, in what follows we indicate by 
$\zoga$ (resp. $\zoga(t)$,  $\zoga_\eps$, $\zoga_\eps(t)$) 
the constant speed reparametrization of $\ga$
(resp. $\ga(t)$,  $\gae$, $\gae(t)$) on the interval $[0,\lunghezza]$. 

Our main convergence result reads as follows.
\begin{Theorem}[\textbf{Asymptotic convergence}]
\label{teo:asymptotic_convergence}
Let $\inidatmcf$ be a planar immersed regular
curve satisfying  \eqref{eq:inidatmcf}, \eqref{eq:inidatmcf_curvature},
and let 
$\ga$ be the solution to 
\eqref{eq:initial_bdry_value_prb_intro} in 
$[0, \Tsingmcf)$.
For any $\eps\in(0,1]$ 
denote by 
\(\gae\) the solution to \eqref{eq:eps_dir_motion_equation}, \eqref{eq:eps_dir_boundary_conditions}
in $[0,+\infty)$.
Then the corresponding
parametrizations $\zogae$ converge in 
$C_{\mathrm{loc}}^\infty((0,\Tsingmcf)\times[0,1])$
to the map $\zoga$, as $\eps \to 0^+$.
Furthermore, if 
$\inidat \in C^\infty([0, \ell(\inidat)])$ and
 all derivatives of $\inidat$ of even order 
at $0$ and $\ell(\inidat)$ vanish, then the convergence
takes place 
in 
$C_{\mathrm{loc}}^\infty([0,\Tsingmcf)\times[0,1])$.
\end{Theorem}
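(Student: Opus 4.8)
The plan is to combine uniform-in-$\eps$ a priori estimates on the regularized flow with a compactness argument, identifying the limit through the uniqueness of the curvature flow. After reparametrizing each $\gae(t)$ at constant speed on $[0,1]$, so that $\zogae$ solves a parabolic system on the fixed domain $[0,1]$ with the same endpoint constraints as $\zoga$, the first elementary input is the gradient-flow structure: $F_\varepsilon(\gae(t))\le F_\varepsilon(\initialcurve)$ yields a uniform upper bound on $\ell(\gae(t))$, while the fixed endpoints give the uniform positive lower bound $\ell(\gae(t))\ge|\initialpoint-\finalpoint|$. These length bounds are what will make the interpolation (Gagliardo--Nirenberg) constants on the evolving curves uniform in $\eps$.

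The heart of the argument is a family of uniform-in-$\eps$ curvature estimates. I would fix $T<\Tsingmcf$ and aim at bounds
$$\sup_{t\le T}\int_{\gae(t)}(\partial_s^k\kappa_{\gae})^2\,ds\le C_k(T),\qquad k=0,1,2,\dots,$$
with $C_k(T)$ independent of $\eps$. Differentiating these quantities along the flow and using $\partial_t\kappa_{\gae}=\partial_s^2 V+\kappa_{\gae}^2 V$ and $\partial_t\,ds=-\kappa_{\gae} V\,ds$ with $V=\kappa_{\gae}-\eps(2\partial_s^2\kappa_{\gae}+\kappa_{\gae}^3)$, the crucial observation is that the highest-order contribution of the $\eps$-term carries a favourable (dissipative) sign, e.g. $-4\eps\int(\partial_s^2\kappa_{\gae})^2\,ds$ at the level $k=0$; the remaining polynomial terms are then absorbed via interpolation and Young's inequality, so that the $\eps$-terms only help. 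The boundary terms produced by the integrations by parts are controlled using the prescribed condition $\kappa_{\gae}=0$ at the endpoints together with the \emph{derived} condition $\partial_s^2\kappa_{\gae}=0$ there: differentiating in time the constraints $\gae(t,0)\equiv\initialpoint$, $\gae(t,\ell(\gae(t)))\equiv\finalpoint$ gives $(\partial_t\gae)^\perp=0$ at the endpoints, and since $\kappa_{\gae}=0$ there the flow equation forces $\partial_s^2\kappa_{\gae}=0$ at $s\in\{0,\ell(\gae(t))\}$, which annihilates the dangerous boundary contributions.

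I expect the main obstacle to be closing these estimates up to \emph{any} $T<\Tsingmcf$ rather than merely for a short time, since a crude differential inequality of the form $y'\le Cy^p$ only controls a possibly small interval. To reach the full interval I would run a continuity/bootstrap argument comparing $\gae$ with the smooth limit flow $\ga$: on the maximal subinterval where $\zogae$ stays $C^m$-close to $\zoga$, the geometry of $\gae$ (hence all interpolation constants) is controlled by that of $\ga$ on $[0,T]$, the dissipative estimates furnish uniform bounds on all $\partial_s^k\kappa_{\gae}$, and the difference $w_\eps=\zogae-\zoga$ solves a linear parabolic equation forced by the $O(\eps)$ term $-\eps(2\partial_s^2\kappa_{\gae}+\kappa_{\gae}^3)\nu_{\gae}$; a Gronwall estimate then gives $\|w_\eps\|\lesssim\eps$, improving the closeness and forcing the subinterval to exhaust $[0,T]$. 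Upgrading the $L^2$-in-space curvature bounds to pointwise bounds (Sobolev embedding on curves of controlled length) and then to space-time $C^\infty$ bounds (the flow equation expresses $\partial_t\zogae$ through the controlled quantities) yields precompactness of $\{\zogae\}$ in $C^\infty_{\mathrm{loc}}((0,T)\times[0,1])$.

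Finally, along any subsequence $\zogae\to\zeta$ in $C^\infty_{\mathrm{loc}}$ one passes to the limit in the equation: since $\partial_s^2\kappa_{\gae}$ and $\kappa_{\gae}^3$ are uniformly bounded, the term $-\eps(2\partial_s^2\kappa_{\gae}+\kappa_{\gae}^3)\nu_{\gae}$ vanishes and $\zeta$ solves the curvature flow with the same boundary and initial data; by uniqueness $\zeta=\zoga$, and as every subsequence shares this limit, the whole family converges. The restriction to $(0,\Tsingmcf)$ in the first assertion reflects a temporal boundary layer at $t=0$: being fourth order, the regularized flow requires higher-order compatibility conditions at the corners $\{t=0\}\times\{0,\ell(\initialcurve)\}$, and the conditions $\kappa=\partial_s^2\kappa=0$, iterated through the equation, translate exactly into the vanishing at the endpoints of all even-order derivatives of $\initialcurve$. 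When this holds no initial layer forms, the uniform estimates extend down to $t=0$, and the convergence improves to $C^\infty_{\mathrm{loc}}([0,\Tsingmcf)\times[0,1])$.
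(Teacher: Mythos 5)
Your overall architecture coincides with the paper's: uniform length bounds from the gradient-flow structure (with $\lgaet\ge|\initialpoint-\finalpoint|$ making the Gagliardo--Nirenberg constants uniform), dissipative $\eps$-terms in the $L^2$ estimates of $\kgaet$ and its derivatives, boundary terms killed by $\kgaet=0$ and the derived condition $\partial_s^2\kgaet=0$, Ascoli--Arzel\`a compactness upgraded to $C^\infty_{\mathrm{loc}}$ and identification of the limit by uniqueness of \eqref{eq:initial_bdry_value_prb_intro}, and the compatibility discussion at $t=0$. The genuinely different ingredient is your continuation mechanism past the short time furnished by $y'\le C(y^5+y^3+y^2)$. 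The paper argues by contradiction at $\Tmax=\sup\{T:\zogae\to\zoga\}$ via the doubling-time ODE Lemma \ref{lem:ODE}: smoothness of $\zoga$ up to $\Tmax<\Tsingmcf$ plus the already-established convergence at a time $\tempo$ near $\Tmax$ give uniform initial data for a comparison with $g'=Z(g)$, and $g\le 2g(T_\infty)$ on an interval of length $\ge\theta/2$ pushes the uniform bound on $\int\kgaet^2\,ds$ strictly beyond $\Tmax$. Your alternative --- a continuity argument with $w_\eps=\zogae-\zoga$ and a Gronwall stability estimate $\|w_\eps\|\lesssim\eps$ --- is plausible and would even yield a convergence rate, which the paper's soft compactness-plus-uniqueness route does not; but it is heavier than you advertise: in the constant-speed parametrization both flows carry \emph{nonlocal} tangential velocities, so $w_\eps$ does not solve a clean linear parabolic equation (the principal parts differ in order, and the coefficients and forcing contain nonlocal terms), whereas the paper never compares the two flows directly.

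The one substantive flaw is that your evolution formulas are the closed-curve ones: you write $\partial_t\kappa_{\gae}=\partial_s^2V+\kappa_{\gae}^2V$ and $\partial_t\,ds=-\kappa_{\gae}V\,ds$ with no tangential component, whereas here $\partial_t\gae=-E_\eps\normgaet+\veltan\taugaet$ with $\veltan\not\equiv0$, and the correct identities (Lemma \ref{lem:evolution_geometric_quantities}) carry the extra terms $\veltan\partial_s\kappa_{\gae}$ and $\partial_s\veltan$. These are not decorative: differentiating $\int_0^{\lgaet}(\partial_s^j\kgaet)^2\,ds$ over the moving arclength domain produces the endpoint term $(\partial_s^j\kgaet(\lgaet))^2\,\dot\ell(\gaet)$, which cancels precisely because $\veltan(t,\lgaet)=-\dot\ell(\gaet)$ and $\veltan(t,0)=0$ (equations \eqref{eq:vel_tan_destra}, \eqref{eq:vel_zero_sinistra}); the paper flags exactly this bookkeeping as the novelty over the closed-curve case. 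Relatedly, your compactness step needs an $\eps$-uniform bound on $|\partial_t\zogae|$, which includes $\veltan$; the paper obtains it from the nonlocal formula $\veltan(t,s)=-\int_0^s E_\eps\kgaet\,d\sigma$, giving $|\veltan|\le\lgaet\|E_\eps\|_{L^\infty}\|\kgaet\|_{L^2}$ --- easy once $E_\eps$ is bounded, but it must be stated. Finally, for $j\ge1$ the boundary terms $[\partial_s^j\kgaet\,\partial_s^{j+1}\kgaet]$, $[\partial_s^j\kgaet\,\partial_s^{j+3}\kgaet]$, $[\partial_s^{j+1}\kgaet\,\partial_s^{j+2}\kgaet]$ require the vanishing at $\{0,\lgaet\}$ of \emph{all} even-order derivatives of $\kgaet$ (the paper's inductive Lemma \ref{lem:derivate_j_k_pari}, resting on the parity argument that every monomial of $\qol^{2n+1}$ and $\qol^{2n+3}$ must contain an even-order factor), not merely $\kgaet=\partial_s^2\kgaet=0$; you invoke the right iteration only at the end, for the compatibility of $\inidat$, but it is needed inside the higher-order estimates as well.
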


As a consequence, we observe that for $t \in (0, \Tsingmcf)$, 
all space
derivatives of $\gamma(t,\cdot)$ of even order at $0$ and $ \ell(\gat)$ vanish.

The proof
of Theorem \ref{teo:asymptotic_convergence}
 partially follows the lines in \cite{Bellettini_Mantegazza_Novaga:04}, where convergence is established for the curvature flow of regular \textit{closed} curves immersed in \( \mathbb{R}^2 \). The novelty in the present setting lies in 
accounting for boundary terms and for the tangential component of the velocity, that we denote with \( \veltan:=\langle \partial_t \gae, \tau_{\gae}\rangle \).

 The crucial point is to obtain $\eps$-independent integral 
estimates of the 
curvature and all its derivatives.
Indeed, one can see that, by an ODE's argument, 
since all the flows (letting $0<\eps\le 1$ vary) 
start from a common initial smooth curve, 
fixing any $j \in \mathbb{N}$, there exists a common positive interval 
of time such that all the quantities $\|\partial^i_s \k\|_{L^2}$, 
for $i\in\{0,\dots,j\}$ are equibounded. 
This will allow us to get compactness and $C^\infty$ 
convergence to the curvature flow as $\eps\to 0^+$. 
Furthermore, our convergence
 result holds up to  $\Tsingmcf$, and not only
up to $\Tmax$ (defined in Proposition \ref{prop:uniform_estimate_of_k_square}). 

We point out that the case of a fourth-order 
regularization of the curvature flow for immersed planar 
curves with different boundary conditions 
is currently under investigation. For examples, Neumann boundary 
conditions, or  
when the boundary points are free to move along two parallel lines, 
and even, more generally, with prescribed trajectories.
However, this analysis appears to be significantly more challenging than the Dirichlet case, because in the latter case, one can exploit the fact that the velocity \( \partial_t \gamma \) vanishes at the boundary. This allows one to discard all boundary terms 
arising from integration by parts. In contrast, under different boundary conditions, we currently do not know how to handle these contributions, which are related with the presence of tangential velocity at the boundary points, a quantity that in general seems
difficult to estimate. 
\section{Notation and preliminaries}\label{sec:preliminaries}

Given $I:=[0,1]$, 
we consider planar parametrized 
immersed curves $\zoga:I\to\mathbb{R}^2$.  
If $\zoga\in C^k(I, \R^2)$ we say that
$\zoga$ is of class $C^k$. 
A curve of class $C^1$ is regular if $\zoga_x(x)=\frac{d\zoga}{dx}(x)\neq 0$ for every $x\in I$, in which case
its unit tangent vector $\tau= \tau_\gamma= \zoga_x/|\zoga_x|$
is well-defined. 
We indicate by $\nu = \nu_\gamma$ 
the unit normal vector, which is
the counterclockwise rotation $R$ by $\pi/2$ of $\tau$, $\nu = R \tau$. 
The arc-length parameter of curve $\zoga$ (null in zero) is denoted by $s$ and 
is given by
\begin{equation*}
    s:=s(x)=\int^x_0 |\zoga_x(\zeta)|d\zeta, \qquad x \in I.
\end{equation*}
The curve $\Upsilon$, reparametrized by arc-length, 
will be indicated by $\gamma : [0, \ell(\gamma)] \to \R^2$,
with $\tau_\ga=\ga_s$ and 
\begin{equation*}
    \ell(\ga):=\int_I|\zoga_x(x)|dx
\end{equation*}
the length of $\ga$.
If $\ga$ is of class $C^2$, we define the curvature 
vector 
$\ga_{ss} =: 
\kappa_\gamma \nu_\ga$. 
We also recall that 
\begin{equation}\label{eq:Serret-Frenet}
     \partial_s\nu_\ga=-\kappa_\ga\tau_\ga.
\end{equation}
In what follows we will consider time-dependent families of curves $(\zoga(t,x))_{t\in[0,T]}$. 
We often write $\gamma(t, \cdot) = \gamma(t)(\cdot)$.
Again, we let $\tau_{\ga(t)}$ be the unit tangent vector to the curve, 
$\nu_{\ga(t)}$ the unit normal vector and $\kappa_{\ga(t)}$ 
its curvature. 

We denote by $\partial_x f$, $\partial_s f$ and $\partial_t f$ the derivatives of a function $f$ along a curve with respect
to the variable $x$, the arc length parameter $s$ on such a curve and the time, respectively. Moreover $\partial^n_x f$, $\partial^n_s f$, $\partial^n_t f$ are the higher order partial derivatives.

When we parametrize $\ga$ by 
constant speed on the interval $[0,1]$ we have $s(t,x) =x\lgat$,
 and $|\partial_x \zoga(t)|=\ell(\gat)$.
 
 In what follows, with a small abuse of notation, we sometimes write 
 $\displaystyle \int_{\gamma} (1+\varepsilon \curv^2) ds$ in place of 
 the right-hand side of \eqref{eq:F_eps}. A similar notation will be adopted for integrals of general quantities on  a curve $\gamma$.
 %

\section{Estimates of geometric quantities}
Let us denote
\begin{equation}\label{eq:energy_eps}
\menogradFeps(t,\cdot) =
\menogradFeps
:=-\kgaet+\eps\Big(2\partial^2_s \kgaet + \kgaet^3\Big)
\end{equation}
the normal velocity of a curve \(\gae\) evolving by the $\eps$-elastic flow. In \eqref{eq:eps_dir_motion_equation} only the normal component of
the velocity is prescribed. This does not mean that the tangential velocity is necessarily zero.
Indeed the motion equation can be written as
\begin{equation}\label{eq:evol_eq_completa}
    \partial_t\gae=-E_\eps\normgaet+\veltan\taugaet\quad \text{in } 
\domgae:= \bigcup_{t \in [0,+\infty)}
\left(\{t\} \times [0,\lgaet]\right),
\end{equation}
where 
$$\veltan=\langle \partial_t\gae,\tau_{\gae}\rangle.
$$
We observe that from the Dirichlet boundary conditions in \eqref{eq:eps_dir_boundary_conditions}, it follows $\frac{d}{dt}\gae(t,s)=0$ for $s\in\{0,\lgaet\}$. 
Therefore,
 from the evolution equation  \eqref{eq:evol_eq_completa} 
we deduce
\begin{align}\label{eq:vel_zero_sinistra}
  E_\eps(t,0)=0, \quad \veltan(t,0)=0 \quad \forall t \in {(0,+\infty)}
\end{align}
and 
\begin{equation*}
\begin{aligned}
 0
 &=\partial_t\gae(t,\lgaet)+\partial_s\gae(t,\lgaet)\dot \ell(\gaet)
 \\
 &= -E_\eps(t,\lgaet)\normgaet(\lgaet)
 +
 [\veltan(t,\lgaet)+\dot \ell(\gaet)]
 \taugaet(\lgaet),
\end{aligned}
\end{equation*}
from which we get 
\begin{equation}\label{eq:vel_norm_zero_destra}
   E_\eps(t,\lgaet)=0 \qquad \forall t \in {(0,+\infty)},
\end{equation}
and 
\begin{equation}\label{eq:vel_tan_destra}
    \veltan(t,\lgaet)=-\dot \ell(\gaet) \qquad \forall t \in {(0,+\infty)}.
\end{equation}
We recall from \cite[Lemma 2.19]{Mantegazza-Pluda_Pozzetta:04} 
the following formulas, which we prove for completeness.                  
\begin{Lemma}[Evolution of geometric quantities]\label{lem:evolution_geometric_quantities}
Let $\eps\in(0,1]$ 
and $\gae:\domgae 
\to\mathbb{R}^2$ be a curve moving by \eqref{eq:evol_eq_completa}. Then 
the following formulas hold:
  \begin{align}
      \partial_t\partial_s-\partial_s\partial_t&=(-\kappa_{\gae}E_\eps-\partial_s\veltan)\partial_s, 
\label{eq:commutation_rule} 
\\
       \partial_t(ds)&=(\kappa_{\gae}E_\eps+\partial_s\veltan) \ ds,
 \label{eq:evolution_ds}
\\
      \partial_t\tau_{\gae}&=(-\partial_sE_\eps+\veltan\kappa_{\gae})
\nu_{\gae}, 
\\
      \partial_t\nu_{\gae}
&=-(-\partial_sE_\eps+\veltan\kappa_{\gae})\tau_{\gae},
  \end{align}
and
  \begin{equation}
      \begin{aligned}
           \partial_t \kappa_{\gae} &= -\partial^2_s \menogradFeps-\kappa_{\gae}^2\menogradFeps+ \veltan\partial_s\kappa_{\gae} 
           \\
            &= \partial^2_s \kgaet+
\kgaet^3-2\varepsilon\partial^4_s \kgaet-6\varepsilon \kgaet(\partial_s
\kgaet)^2-5\varepsilon \kgaet^2\partial^2_s \kgaet-\varepsilon \kgaet^5
\\
&\quad
+ \veltan\partial_s\kappa_{\gae}.  \label{eq:evolution_curvature}
      \end{aligned}
  \end{equation}
\end{Lemma}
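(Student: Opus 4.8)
The plan is to reduce all five identities to a single computation, namely the time derivative of the metric coefficient $|\partial_x\gae|$, and then to propagate it through the Serret--Frenet frame. Write $V:=\partial_t\gae=-E_\eps\nu_{\gae}+\veltan\tau_{\gae}$ for the velocity. Using $\partial_s\tau_{\gae}=\kappa_{\gae}\nu_{\gae}$ together with \eqref{eq:Serret-Frenet}, I first record the tangential--normal splitting
\begin{equation*}
\partial_s V=(\kappa_{\gae}E_\eps+\partial_s\veltan)\,\tau_{\gae}+(-\partial_sE_\eps+\veltan\kappa_{\gae})\,\nu_{\gae},
\end{equation*}
which will be used repeatedly. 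Differentiating $|\partial_x\gae|^2=\langle\partial_x\gae,\partial_x\gae\rangle$ in time, commuting $\partial_t$ with $\partial_x$ (legitimate since $x$ and $t$ are independent variables), and projecting onto $\tau_{\gae}$, I get $\partial_t|\partial_x\gae|=(\kappa_{\gae}E_\eps+\partial_s\veltan)\,|\partial_x\gae|$. Formula \eqref{eq:evolution_ds} is then immediate from $ds=|\partial_x\gae|\,dx$.

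Next I would derive the commutation rule \eqref{eq:commutation_rule} by differentiating the identity $\partial_s=|\partial_x\gae|^{-1}\partial_x$ in time and substituting the expression for $\partial_t|\partial_x\gae|$ just obtained; the only point requiring care is that $\partial_s$ itself depends on time, so it cannot be interchanged with $\partial_t$ for free. Applying \eqref{eq:commutation_rule} to $\gae$ and using $\tau_{\gae}=\partial_s\gae$ yields $\partial_t\tau_{\gae}=\partial_sV+(-\kappa_{\gae}E_\eps-\partial_s\veltan)\tau_{\gae}$; inserting the splitting of $\partial_sV$, the tangential contributions cancel and leave $\partial_t\tau_{\gae}=(-\partial_sE_\eps+\veltan\kappa_{\gae})\nu_{\gae}$. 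The evolution of $\nu_{\gae}$ then follows purely from orthonormality: $\langle\partial_t\nu_{\gae},\nu_{\gae}\rangle=0$ forces $\partial_t\nu_{\gae}$ to be parallel to $\tau_{\gae}$, and differentiating $\langle\tau_{\gae},\nu_{\gae}\rangle=0$ fixes the coefficient as $-(-\partial_sE_\eps+\veltan\kappa_{\gae})$.

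Finally, for the curvature I would differentiate the identity $\kappa_{\gae}\nu_{\gae}=\partial_s\tau_{\gae}$ in time, applying the commutation rule \eqref{eq:commutation_rule} once more on the right-hand side and the already-established evolutions of $\tau_{\gae}$ and $\nu_{\gae}$. Projecting the resulting vector identity onto $\nu_{\gae}$ gives the first line of \eqref{eq:evolution_curvature}, namely $\partial_t\kappa_{\gae}=-\partial_s^2E_\eps-\kappa_{\gae}^2E_\eps+\veltan\partial_s\kappa_{\gae}$; here the terms proportional to $\partial_s\veltan$ cancel, reflecting the fact that the tangential velocity does not alter the shape of the curve. The second, expanded identity in \eqref{eq:evolution_curvature} is then a matter of algebra: one substitutes $E_\eps=-\kappa_{\gae}+\eps(2\partial_s^2\kappa_{\gae}+\kappa_{\gae}^3)$ from \eqref{eq:energy_eps} and uses $\partial_s^2(\kappa_{\gae}^3)=6\kappa_{\gae}(\partial_s\kappa_{\gae})^2+3\kappa_{\gae}^2\partial_s^2\kappa_{\gae}$. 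I expect no genuine difficulty here: the computations are standard, and the main obstacle is bookkeeping, in particular tracking the time dependence of $\partial_s$ (which is precisely what forces the repeated use of the commutation rule) and correctly following the sign cancellations of the tangential terms at each stage.
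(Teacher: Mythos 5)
Your proposal is correct and follows essentially the same route as the paper's proof: both rest on the time derivative of the metric coefficient $|\partial_x\gae|$, the commutation rule \eqref{eq:commutation_rule}, and propagation through the Frenet frame, and all your claimed cancellations (of the tangential terms in $\partial_t\tau_{\gae}$ and of $\partial_s\veltan$ in $\partial_t\kappa_{\gae}$) check out, as does the algebraic expansion via $\partial_s^2(\kappa_{\gae}^3)=6\kappa_{\gae}(\partial_s\kappa_{\gae})^2+3\kappa_{\gae}^2\partial_s^2\kappa_{\gae}$. Your only deviations are cosmetic: you derive \eqref{eq:evolution_ds} before the commutation rule, obtain $\partial_t\nu_{\gae}$ from orthonormality rather than from $\nu_{\gae}=R\tau_{\gae}$, and differentiate the vector identity $\kappa_{\gae}\nu_{\gae}=\partial_s\tau_{\gae}$ instead of the scalar $\kappa_{\gae}=\langle\partial_s\tau_{\gae},\nu_{\gae}\rangle$.
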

\begin{proof}
   We have
   \begin{equation*}
   \begin{aligned}
        \partial_t\partial_s-\partial_s\partial_t&=\frac{\partial_t\partial_x}{|\partial_x\gae|}-\frac{\langle \partial_x\gae,\partial_t\partial_x\gae\rangle\partial_x}{|\partial_x\gae|^3}-\frac{\partial_x\partial_t}{|\partial_x\gae|}
        =-\langle\tau_{\gae},\partial_s\partial_t\gae\rangle\partial_s
        \\
        &= -\langle \tau_{\gae},\partial_s(-E_\eps\nu_{\gae}+
\veltan\tau_{\gae})\rangle\partial_s
=
(-\kappa_{\gae}
E_\eps
-\partial_s\veltan)\partial_s,
   \end{aligned}
   \end{equation*}
where in the last equality we used equation \eqref{eq:Serret-Frenet}. Thus \eqref{eq:commutation_rule} holds, and we can compute 
\begin{align*}
    \partial_t(ds)&=\partial_t|\partial_x\gae|dx
=\frac{\langle \partial_x\gae,\partial_t\partial_x\gae\rangle}{|\partial_x\gae|} dx 
=\langle \tau_{\gae},\partial_s\partial_t\gae\rangle ds
=(\kappa_{\gae} 
E_\eps
+\partial_s\veltan)ds,
\\
     \partial_t\tau_{\gae}&=\partial_t\partial_s\gae=\partial_s\partial_t\gae-(\kappa_{\gae}E_\eps+\partial_s\veltan)\partial_s\gae
       \\
       &=\partial_s(-E_\eps\nu_{\gae}+\veltan\tau_{\gae})-(\kappa_{\gae}E_\eps+\partial_s\veltan)\tau_{\gae}=(-\partial_sE_{\eps}+\kappa_{\gae}\veltan)\nu_{\gae}.
   \end{align*} 
Moreover
       $\partial_t\nu_{\gae}=\partial_t(R\tau_{\gae})=R(\partial_t\tau_{\gae})=-(-\partial_sE_\eps+\kappa_{\gae}\veltan)\tau_{\gae},$
and 
   \begin{align*}
       \partial_t\kappa_{\gae}&=
\partial_t\langle\partial_s\tau_{\gae},\nu_{\gae}\rangle=\langle\partial_t\partial_s\tau_{\gae},\nu_{\gae}\rangle=\langle \partial_s\partial_t\tau_{\gae},\nu_{\gae}\rangle+(-\kappa_{\gae} E_\eps-\partial_s\veltan)\langle\partial_s\tau_{\gae},\nu_{\gae}\rangle 
\\
&= \partial_s\langle\partial_t\tau_{\gae},\nu_{\gae}\rangle-\kappa_{\gae}^2E_\eps-\kappa_{\gae}\partial_s\veltan=\partial_s(-\partial_s E_\eps+\veltan \kappa_{\gae})-\kappa_{\gae}^2E_\eps-\kappa_{\gae}\partial_s\veltan
\\
&
=-\partial^2_sE_\eps+\veltan\partial_s\kappa_{\gae}-\kappa_{\gae}^2E_\eps.
\end{align*}
 By expanding $E_\eps$, the second equality in \eqref{eq:evolution_curvature} follows.
\end{proof}
Fixing $s\in (0,\lgaet)$, and using \eqref{eq:evolution_ds} we have
\begin{align*}
    0
    &=
    \partial_t\int_0^{s} d\sigma
    =\int_0^s \partial_s\veltan(t,\sigma) \ d\sigma+\int_0^s E_\eps \kgaet \ d\sigma 
    \\
    &=
    \veltan(t,s)-\veltan(t,0)+\int_0^s E_\eps \kgaet \ d\sigma 
\end{align*}
from which, recalling \eqref{eq:vel_zero_sinistra}, we get
\begin{equation}\label{eq:velocita_tangenziale}
    \veltan(t,s)=-\int_0^s E_\eps\kgaet \ d\sigma.
\end{equation}
\subsection{\texorpdfstring{\(\eps\)-uniform estimates of length, 
energy and normal velocity}{epsilon-uniform estimates of length, energy and normal velocity}}
Obviously 
\begin{equation}\label{eq:lgaet_geq_1}
\lgaet \geq |\initialpoint-\finalpointeps| \qquad \forall 
t \in [0,+\infty), \  \forall \eps \in (0,1].
\end{equation}
Moreover
there exists a constant $C=C(\inidat)>0$ such that 
\begin{equation}\label{eq:upper_bound_on_the_length}
\sup_{t \in [0,+\infty)}
\sup_{\eps \in (0,1]}    
\lgaet\leq C\ \qquad \forall\eps\in(0,1].
\end{equation}
Indeed, 
$$
\ell(\gaet) \leq F_\eps(\gaet) \leq F_\eps(\inidat) \leq \int_0^{\ell(\inidat)}
\Big(1+ \curv_{\inidat}^2\Big)~ds \leq C,
$$
as a consequence 
of the fact that the PDE in \eqref{eq:eps_dir_motion_equation} is the 
gradient flow of $F_\eps$ and $\eps \in (0,1]$.

\begin{Lemma}[Energy dissipation equality]\label{lem:dissipation_eq}
For any $\eps \in (0,1]$ and any 
$t\in(0,+\infty)$ we have 
 \begin{equation}\label{eq:energy_monotonicity_bis}
\begin{aligned}
    \frac{d}{dt} F_\eps(\gae(t))=&
-\int_0^{\lgaet} E_\eps^2(t,s) ds
\\
=&-\frac{1}{2}\int_0^{\lgaet}|(\partial_t\gae(t))^\perp|^2ds 
-\frac{1}{2}\int_0^{\lgaet} E_\eps^2(t,s) ds.
\end{aligned}
\end{equation}
\end{Lemma}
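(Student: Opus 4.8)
The plan is to differentiate $F_\eps(\gae(t))$ directly along the flow, working in the constant-speed parametrization on $[0,1]$ so that the integration domain is fixed and the two endpoints remain at the fixed parameter values $x=0,1$, mapped to $\initialpoint,\finalpointeps$. Writing $F_\eps(\gae(t))=\int_0^1(1+\eps\kgaet^2)\,|\partial_x\gae|\,dx$, differentiating under the integral sign (legitimate by smoothness of $\gae$), and using \eqref{eq:evolution_ds}, I would arrive at
\[
\frac{d}{dt}F_\eps(\gae(t))=\int_0^{\lgaet}\Big[2\eps\,\kgaet\,\partial_t\kgaet+(1+\eps\kgaet^2)\big(\kgaet E_\eps+\partial_s\veltan\big)\Big]\,ds,
\]
with $\partial_t$ taken at fixed $x$. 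I would then replace $\partial_t\kgaet$ by its first expression in \eqref{eq:evolution_curvature}, namely $\partial_t\kgaet=-\partial_s^2E_\eps-\kgaet^2E_\eps+\veltan\partial_s\kgaet$.

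The next step is to collect the terms carrying the tangential velocity. The combination $2\eps\kgaet\veltan\partial_s\kgaet+(1+\eps\kgaet^2)\partial_s\veltan$ is exactly the total arclength derivative $\partial_s\big[\veltan(1+\eps\kgaet^2)\big]$, so its contribution integrates to the boundary value $\big[\veltan(1+\eps\kgaet^2)\big]_0^{\lgaet}$. In the constant-speed parametrization the endpoints are fixed parameter values, so the partial time derivative $\partial_t\gae$ vanishes there, and with it both the normal and the tangential components of the velocity; in particular $\veltan=0$ at $s\in\{0,\lgaet\}$ and this boundary term drops. This is the manifestation, in the present computation, of the mechanism emphasized in the Introduction: under Dirichlet conditions the tangential velocity at the endpoints never contributes.

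For the surviving terms $2\eps\kgaet\big(-\partial_s^2E_\eps-\kgaet^2E_\eps\big)+(1+\eps\kgaet^2)\kgaet E_\eps$, I would integrate the fourth-order piece $-2\eps\kgaet\,\partial_s^2E_\eps$ by parts twice. The resulting boundary contributions, $\big[\kgaet\,\partial_s E_\eps\big]_0^{\lgaet}$ and $\big[\partial_s\kgaet\,E_\eps\big]_0^{\lgaet}$, both vanish: the first because $\kgaet=0$ at the endpoints by the boundary conditions \eqref{eq:eps_dir_boundary_conditions}, the second because $E_\eps=0$ there by \eqref{eq:vel_zero_sinistra} and \eqref{eq:vel_norm_zero_destra}. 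What remains is $\int_0^{\lgaet}\big(-2\eps\partial_s^2\kgaet-\eps\kgaet^3+\kgaet\big)E_\eps\,ds$, and recalling from \eqref{eq:energy_eps} that $E_\eps=-\kgaet+\eps(2\partial_s^2\kgaet+\kgaet^3)$, the bracket is precisely $-E_\eps$. This gives the first equality $\frac{d}{dt}F_\eps(\gae(t))=-\int_0^{\lgaet}E_\eps^2\,ds$.

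The second equality is then immediate. Since \eqref{eq:eps_dir_motion_equation} prescribes only the normal velocity, $(\partial_t\gae)^\perp=-E_\eps\normgaet$, so that $|(\partial_t\gae)^\perp|^2=E_\eps^2$ pointwise and $\int_0^{\lgaet}|(\partial_t\gae)^\perp|^2\,ds=\int_0^{\lgaet}E_\eps^2\,ds$; writing $-\int E_\eps^2=-\tfrac12\int|(\partial_t\gae)^\perp|^2-\tfrac12\int E_\eps^2$ yields the claim. I expect the only genuinely delicate point to be the bookkeeping of boundary terms: one must make sure that each boundary contribution—one from the tangential total derivative and two from the repeated integration by parts—is annihilated by one of the three facts $\veltan=0$, $\kgaet=0$, $E_\eps=0$ at the endpoints. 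Adopting the constant-speed parametrization, in which $\partial_t\gae=0$ at the fixed endpoints, is exactly what makes all of these vanish simultaneously and is the step to handle with care.
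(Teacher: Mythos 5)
Your proof is correct, and its computational core --- using \eqref{eq:evolution_ds} and \eqref{eq:evolution_curvature}, collecting the tangential terms into the total derivative $\partial_s\big[\veltan(1+\eps\kgaet^2)\big]$, integrating $-2\eps\kgaet\,\partial_s^2E_\eps$ by parts twice, killing the resulting boundary terms via $\kgaet=0$ and $E_\eps=0$ at the endpoints, and recognizing the surviving integrand as $-E_\eps^2$ --- coincides with the paper's. Where you genuinely deviate is in the bookkeeping of the moving endpoint. The paper differentiates $\int_0^{\lgaet}(1+\eps\kgaet^2)\,ds$ in the arclength parametrization, so the Leibniz rule produces the extra term $\big(1+\eps\kgaet^2(\lgaet)\big)\dot\ell(\gaet)$, which is then cancelled against the tangential boundary contribution by means of the identity $\veltan(t,\lgaet)=-\dot\ell(\gaet)$ of \eqref{eq:vel_tan_destra}; you instead fix the domain by passing to the constant-speed parametrization on $[0,1]$, so that no Leibniz term arises and the tangential velocity of \emph{that} parametrization vanishes at both endpoints. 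These are the same fact in disguise: if $\tilde\lambda_\eps$ denotes your tangential velocity, then $\tilde\lambda_\eps(t,x)=\veltan(t,x\lgaet)+x\,\dot\ell(\gaet)$, and $\tilde\lambda_\eps(t,1)=0$ is precisely \eqref{eq:vel_tan_destra}. One caution on notation: in the paper $\veltan$ is defined through the parametrization of $\gae$ on $[0,\lgaet]$ and does \emph{not} vanish at $s=\lgaet$, so your sentence ``$\veltan=0$ at $s\in\{0,\lgaet\}$'' is true only for the reparametrized tangential velocity; moreover, your appeal to \eqref{eq:evolution_ds} and \eqref{eq:evolution_curvature} is legitimate only because Lemma \ref{lem:evolution_geometric_quantities} is derived for a flow of the general form \eqref{eq:evol_eq_completa} with an arbitrary tangential component, so the formulas apply verbatim with $\tilde\lambda_\eps$ in place of $\veltan$. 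Making these two points explicit would close the only loose end; your derivation of the second equality in \eqref{eq:energy_monotonicity_bis} from $(\partial_t\gae)^\perp=-E_\eps\normgaet$ matches the paper and is fine.
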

\begin{proof} 
Using \eqref{eq:evolution_ds}, \eqref{eq:evolution_curvature} and \eqref{eq:vel_tan_destra}, we get
$$
\begin{aligned}
\frac{d}{dt} F_\eps(\gaet)) 
=&\frac{d}{dt}\int_0^{\lgaet} \left(1 +\eps \kgaet^2\right) ds
\\
=& \int_0^{\lgaet} \Big( 2\eps\kgaet\partial_t \kgaet+ (1+\eps\kgaet^2)(
\kgaet E_\eps+\partial_s\veltan) \Big)\ ds
\\
&\quad
+\Big(1 +\eps \kgaet^2(\lgaet)\Big)\dot\ell(\gaet)
\\
    =&\int_0^{\lgaet} \Big(2\eps\kgaet (-\partial^2_s \menogradFeps-\kappa_{\gae}^2\menogradFeps+\veltan\partial_s\kgaet)+
    (1+\eps\kgaet^2)(
\kgaet E_\eps+\partial_s\veltan) \Big) ds
\\
&\quad
-\veltan(t,\lgaet)\Big(1 +\eps \kgaet^2(\lgaet)\Big)
\\=&
\int_0^{\lgaet} \Big(-2\eps\kgaet
\partial^2_sE_\eps
-E_\eps(\eps\kgaet^3-\kgaet)+\partial_s(\veltan(1+\eps\kgaet^2))\Big) ds
\\
&\quad
-\veltan(t,\lgaet)\Big(1 +\eps \kgaet^2(\lgaet)\Big).
\end{aligned}
$$
Integrating twice by parts the term  $ -2\eps\kgaet\partial^2_sE_\eps$
and observing that 
$$
-2\eps \partial^2_s \kgaet E_\eps - E_\eps 
(\eps \kgaet^3 - \kgaet) = - E_\eps(2 \eps \partial^2_{s} \kgaet+ 
\eps \kgaet^3 - \kgaet) = - E_\eps^2,
$$

 we obtain
\begin{equation*}
    \frac{d}{dt} F_\eps(\gaet)=-\int_0^{\lgaet} E_\eps^2 \ ds 
+ 2\eps \Big[- \kgaet\partial_s 
E_\eps+\partial_s\kgaet E_\eps\Big]^{\ell(\gae(t))}_0-\veltan(t,0)\Big(1+\eps\kgaet^2(0)\Big),
\end{equation*}
where we recall that $\gae\in C^{\infty}(\domgae; \mathbb{R}^2)$ 
for any $\delta>0$.

It remains to show that 
\begin{equation}\label{eq:bordo_energia}
2\eps\Big[- \kgaet\partial_s 
E_\eps(t,\cdot)+\partial_s\kgaet E_\eps(t,\cdot)\Big]^{\ell(\gae(t))}_0-\veltan(t,0)\Big(1+\eps\kgaet^2(0)\Big)=0.
\end{equation}
From \eqref{eq:eps_dir_boundary_conditions} we have 
$$ \kgaet(s)=0 \quad \text{ for }  s\in\{0,\lgaet\}$$
which, together with \eqref{eq:vel_zero_sinistra} and \eqref{eq:vel_norm_zero_destra}, 
yields \eqref{eq:bordo_energia}.
\end{proof}
\begin{Remark}
    In the proof of Lemma \ref{lem:dissipation_eq} we heavily 
use the Dirichlet boundary conditions. Considering 
other boundary conditions would present the challenge of handling the boundary terms in equation~\eqref{eq:bordo_energia}.
\end{Remark}

\begin{Corollary}[$L^2$-estimate of the normal velocity]
There exists a constant $C = C_{\inidat}>0$ such that
for any $T\in (0,+\infty)$ 
 \begin{equation}\label{eq:estimate_of_gamma_t}
      \sup_{\eps \in (0,1]} \int_0^T \Vert (\partial_t \gae(t))^\perp\Vert^2_{L^2([0,\lgaet];\R^2)} \ dt \leq C.
    \end{equation}
\end{Corollary}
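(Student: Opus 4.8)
The plan is to read off the Corollary directly from the energy dissipation equality of Lemma~\ref{lem:dissipation_eq}, integrated in time, using only the nonnegativity of $F_\eps$ and the uniform control of the initial energy that was already exploited for \eqref{eq:upper_bound_on_the_length}. First I would record the pointwise identification of the normal velocity with $E_\eps$: from \eqref{eq:evol_eq_completa} the normal component of the velocity is exactly $(\partial_t\gae)^\perp=-E_\eps\normgaet$, and since $\normgaet$ is a unit vector this gives $|(\partial_t\gae(t))^\perp|^2=E_\eps^2(t,s)$ at every point. Hence
\[
\Vert (\partial_t\gae(t))^\perp\Vert^2_{L^2([0,\lgaet];\R^2)}=\int_0^{\lgaet}E_\eps^2(t,s)\,ds=-\frac{d}{dt}F_\eps(\gae(t)),
\]
where the last equality is precisely the first line of \eqref{eq:energy_monotonicity_bis}.

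Next I would integrate this identity over $[0,T]$, obtaining
\[
\int_0^T\Vert (\partial_t\gae(t))^\perp\Vert^2_{L^2([0,\lgaet];\R^2)}\,dt=F_\eps(\gae(0))-F_\eps(\gae(T)).
\]
Because $F_\eps(\gae(t))=\int_0^{\lgaet}\big(1+\eps\kgaet^2\big)\,ds\ge 0$, the right-hand side is bounded above by $F_\eps(\gae(0))$, and this upper bound is manifestly independent of $T$. Using the initial condition $\gae(0,\cdot)=\inidat$ in \eqref{eq:eps_dir_boundary_conditions}, together with $\eps\in(0,1]$, the same computation used for \eqref{eq:upper_bound_on_the_length} yields
\[
F_\eps(\gae(0))=F_\eps(\inidat)\le\int_0^{\ell(\inidat)}\big(1+\curv_{\inidat}^2\big)\,ds=:C_{\inidat},
\]
a constant depending only on $\inidat$. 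Taking the supremum over $\eps\in(0,1]$ then gives \eqref{eq:estimate_of_gamma_t} with $C=C_{\inidat}$, uniform in both $\eps$ and $T$.

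I do not expect a genuine obstacle here: the Corollary is essentially a bookkeeping consequence of Lemma~\ref{lem:dissipation_eq}, in which all the delicate work (the vanishing of the boundary terms \eqref{eq:bordo_energia}, via the Dirichlet conditions and \eqref{eq:vel_zero_sinistra}--\eqref{eq:vel_norm_zero_destra}) has already been carried out. The only two points requiring a moment's care are the identification $|(\partial_t\gae)^\perp|^2=E_\eps^2$, which is immediate from \eqref{eq:evol_eq_completa}, and the fact that the bound on the initial energy is uniform in $\eps$, which is exactly where the restriction $\eps\le 1$ is needed so that the $\eps\curv_{\inidat}^2$ term does not spoil the estimate.
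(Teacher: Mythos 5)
Your proof is correct and follows essentially the same route as the paper's: both integrate the energy dissipation equality of Lemma~\ref{lem:dissipation_eq} over $[0,T]$, discard the nonnegative term $F_\eps(\gae(T))$, identify $\int_0^{\lgaet} E_\eps^2\,ds$ with $\Vert(\partial_t\gae(t))^\perp\Vert_{L^2}^2$ via \eqref{eq:evol_eq_completa}, and bound the initial energy by $\int_0^{\ell(\inidat)}(1+\kappa_{\inidat}^2)\,ds$ uniformly in $\eps\in(0,1]$. Your explicit remarks on the pointwise identification $|(\partial_t\gae)^\perp|^2=E_\eps^2$ and on where $\eps\le 1$ enters are accurate and merely make explicit what the paper leaves implicit.
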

\begin{proof}
From Lemma \ref{lem:dissipation_eq}, integrating over time, we have 
    \begin{equation*}
    \int_0^T \frac{d}{dt} F_\eps(\gaet) \ dt=- \int_0^T\int_0^{\lgaet}(E_\eps)^2 ds
\end{equation*}
from which
\begin{equation*}
    F_\eps(\gae(T,\cdot))+ 
\int_0^T\int_0^{\lgaet} E_\eps^2 ds \ dt=  F_\eps(\gae(0,\cdot))=F_\eps(\initialcurve)\leq \int_0^{\ell(\inidat)}
\Big(1+ \curv_{\inidat}^2\Big)~ds\leq C,
\end{equation*}
where we have used the initial condition in \eqref{eq:eps_dir_boundary_conditions}.
As a consequence
\begin{align*}
  \int_0^T\int_0^{\lgaet} E_\eps^2 ds \ dt= \int_0^T \Vert (\partial_t 
\gae(t))^\perp\Vert^2_{L^2([0,\lgaet];\R^2)} \ dt \leq C.
\end{align*}
\end{proof}

\subsection{\texorpdfstring{\(\eps\)-uniform estimate of \(\Vert \kappa_{\gae}\Vert_2\)}{epsilon-uniform estimate of norm}}
Let 
\(\gae\) be the solution to \eqref{eq:eps_dir_motion_equation},
\eqref{eq:eps_dir_boundary_conditions}.
One of the crucial ingredients to prove Theorem \ref{teo:asymptotic_convergence}
 is the following

\begin{Proposition}\label{prop:uniform_estimate_of_k_square}
There exists $\Tmax >0$ such that, for any $T \in (0, \Tmax)$ 
\begin{equation}\label{eq:uniform_estimate_of_k_square}
\sup_{\eps\in(0,1]}
\sup_{t\in[0,T]}
\int_0^{\ell(\gaet)} \kgaet^2~ds < +\infty.
\end{equation}
\end{Proposition}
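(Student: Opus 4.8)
The plan is to derive a closed differential inequality for the quantity $y(t):=\int_0^{\lgaet}\kgaet^2\,ds$ and then to compare it with a scalar ODE whose blow-up time is independent of $\eps$. First I would differentiate $y$ in time, working over the fixed parameter interval $[0,1]$ (so that the motion of the domain $[0,\lgaet]$ produces no spurious boundary contribution). Using the evolution of the length element \eqref{eq:evolution_ds} together with the first, compact form of the evolution of curvature in \eqref{eq:evolution_curvature}, I obtain
$$
\frac{d}{dt}\int_0^{\lgaet}\kgaet^2\,ds
= \int_0^{\lgaet}\Big(-2\kgaet\,\partial_s^2 E_\eps - \kgaet^3 E_\eps + \partial_s(\veltan\,\kgaet^2)\Big)\,ds,
$$
where the two tangential terms $2\kgaet\veltan\partial_s\kgaet+\kgaet^2\partial_s\veltan$ have been grouped into the single divergence $\partial_s(\veltan\kgaet^2)$. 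Its integral equals the boundary value $[\veltan\kgaet^2]_0^{\lgaet}$, which vanishes because $\kgaet=0$ at both endpoints by \eqref{eq:eps_dir_boundary_conditions}; this is precisely where the Dirichlet conditions eliminate the tangential velocity from the estimate.

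Next I would integrate the term $-2\kgaet\partial_s^2 E_\eps$ by parts twice. The first integration produces $[-2\kgaet\partial_s E_\eps]_0^{\lgaet}$, which vanishes since $\kgaet=0$ at the endpoints, and the second produces $[2\partial_s\kgaet\,E_\eps]_0^{\lgaet}$, which vanishes since $E_\eps=0$ at the endpoints by \eqref{eq:vel_zero_sinistra} and \eqref{eq:vel_norm_zero_destra}. With all boundary terms gone I am left with $\frac{d}{dt}\int\kgaet^2\,ds = -\int_0^{\lgaet} E_\eps\,(2\partial_s^2\kgaet+\kgaet^3)\,ds$. Recalling from \eqref{eq:energy_eps} that $E_\eps=-\kgaet+\eps(2\partial_s^2\kgaet+\kgaet^3)$, and integrating $\int\kgaet\partial_s^2\kgaet\,ds=-\int(\partial_s\kgaet)^2\,ds$ by parts (again with vanishing boundary term), this becomes
$$
\frac{d}{dt}\int_0^{\lgaet}\kgaet^2\,ds
= -2\int_0^{\lgaet}(\partial_s\kgaet)^2\,ds + \int_0^{\lgaet}\kgaet^4\,ds - \eps\int_0^{\lgaet}(2\partial_s^2\kgaet+\kgaet^3)^2\,ds.
$$
The decisive structural point is that the only $\eps$-dependent term is a perfect negative square, so it can be discarded to obtain the $\eps$-independent bound $\frac{d}{dt}\,y\le -2\int(\partial_s\kgaet)^2\,ds+\int\kgaet^4\,ds$.

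To close the estimate I would invoke a Gagliardo–Nirenberg interpolation. Since $\kgaet(0)=0$, the identity $\kgaet(s)^2=2\int_0^s\kgaet\,\partial_s\kgaet\,d\sigma$ yields $\|\kgaet\|_{L^\infty}^2\le 2\|\kgaet\|_{L^2}\|\partial_s\kgaet\|_{L^2}$, hence $\int\kgaet^4\le\|\kgaet\|_{L^\infty}^2\|\kgaet\|_{L^2}^2\le 2\|\kgaet\|_{L^2}^3\|\partial_s\kgaet\|_{L^2}$. Young's inequality then absorbs the gradient term into $2\int(\partial_s\kgaet)^2$ at the cost of a higher power of $y$, producing the autonomous inequality $y'(t)\le C\,y(t)^3$ with $C$ independent of $\eps$. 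Comparing with the solution of $z'=Cz^3$, $z(0)=y(0)$, which blows up at $t^\star=(2Cy(0)^2)^{-1}$, I conclude that $y$ stays bounded on every $[0,T]$ with $T<\Tmax$ for any fixed $\Tmax<t^\star$. Since $y(0)=\int_0^{\ell(\inidat)}\kappa_{\inidat}^2\,ds$ is the same for all $\eps$, all flows sharing the initial curve $\inidat$, both $\Tmax$ and the bound are uniform in $\eps\in(0,1]$.

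The main obstacle, and the heart of the argument, is the algebraic reorganization of the right-hand side so that simultaneously (i) every boundary term cancels by virtue of the Dirichlet conditions $\kgaet=0$ and the induced identities $E_\eps=0$ at the endpoints, and (ii) the $\eps$-contributions reassemble into a single term of favorable sign. Once this is achieved, the remaining interpolation and ODE comparison are routine, and the uniformity in $\eps$ follows automatically because no constant depends on $\eps$ and the initial datum is common to all the flows.
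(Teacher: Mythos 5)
Your proof is correct, and although it follows the same overall skeleton as the paper's (differentiate $\int \kgaet^2\,ds$, kill the boundary terms via the Dirichlet conditions, interpolate, close with an ODE comparison), it differs in two genuinely useful ways. First, in the handling of the $\eps$-terms: the paper's Lemma \ref{lem:dt_kappa_2} expands everything and arrives at the integrand $-4(\partial_s^2\kgaet)^2-\kgaet^6-4\kgaet^3\partial_s^2\kgaet$, which is then processed as in \cite[Proposition 3.10]{Bellettini_Mantegazza_Novaga:04} by ``adding a suitable positive quantity'' and invoking the Gagliardo--Nirenberg bounds \eqref{eq:GN_for_u^6}, \eqref{eq:GN_for_u^4}, leading to the inequality \eqref{eq:k^2} with powers $5$, $3$, $2$; you instead keep $E_\eps$ intact through the two integrations by parts (so the boundary terms die directly from $\kgaet=0$ and $E_\eps=0$ at the endpoints, without separately establishing $\partial_s^2\kgaet=0$ there as in \eqref{eq:vanishing_of_third_boundary_term}) and observe that the $\eps$-contribution is \emph{exactly} $-\eps\int(2\partial_s^2\kgaet+\kgaet^3)^2\,ds$ --- indeed the paper's integrand above is precisely this perfect square expanded --- so it can simply be discarded. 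Second, in the interpolation: since $\kgaet(0)=0$, your Agmon-type bound $\Vert\kgaet\Vert_{L^\infty}^2\le 2\Vert\kgaet\Vert_{L^2}\Vert\partial_s\kgaet\Vert_{L^2}$ needs no $1/L$ correction, whereas the paper's route uses the scaled Gagliardo--Nirenberg inequalities together with the length lower bound \eqref{eq:lgaet_geq_1} to control $1/\lgaet$ (a bound that degenerates if $\initialpoint=\finalpoint$, which your argument sidesteps entirely). The net outcome is the cleaner differential inequality $y'\le Cy^3$ in place of $y'\le C(y^5+y^3+y^2)$; both yield an $\eps$-independent $\Tmax$ by comparison, since all flows share the initial value $y(0)=\int_0^{\ell(\inidat)}\kappa_{\inidat}^2\,ds$. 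One harmless point worth stating explicitly: the identity for $\frac{d}{dt}\int\kgaet^2\,ds$ is justified for $t>0$ (where the flow is $C^\infty$), and the comparison extends to $t=0$ by continuity of $y$, exactly as in the paper.
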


In order to prove this proposition \ref{prop:uniform_estimate_of_k_square}
we need some preliminaries. 
Although the statement of the next
lemma coincides with that of \cite[Lemma 3.9]{Bellettini_Mantegazza_Novaga:04}, it must be reproven in our setting, as it requires taking into account both the boundary contributions and the tangential component of the velocity.

\begin{Lemma}\label{lem:dt_kappa_2}
 For any $t \in (0,+\infty)$ 
    we have
    \begin{equation}\label{eq:dt_kappa_2}
\begin{aligned}
       \frac{d}{dt}\int_0^{\lgaet} \kappa_{\gaet}
^2 ds=
&\int_0^{\lgaet}
\Big(-2(\partial_s \kappa_{\gaet})^2
+\kappa_{\gaet}^4\Big) ds
\\
& +\varepsilon\int_0^{\lgaet}\Big(
-4(\partial^2_s(\kappa_{\gaet}))^2- \kappa_{\gaet}^6-4 
\kappa_{\gaet}^3\partial^2_s \kappa_{\gaet}\Big)ds.
\end{aligned}
    \end{equation}
\end{Lemma}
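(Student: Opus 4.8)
The plan is to differentiate the integral directly, using the evolution formulas of Lemma~\ref{lem:evolution_geometric_quantities}, and then to dispose of all boundary contributions by invoking the Dirichlet conditions. Parametrizing over a fixed interval and combining \eqref{eq:evolution_ds} for $\partial_t(ds)$ with \eqref{eq:evolution_curvature} for $\partial_t\kgaet$, I would first write
\begin{equation*}
\frac{d}{dt}\int_0^{\lgaet}\kgaet^2\,ds
=\int_0^{\lgaet}\Big(2\kgaet\,\partial_t\kgaet
+\kgaet^2(\kgaet E_\eps+\partial_s\veltan)\Big)\,ds.
\end{equation*}
Substituting $\partial_t\kgaet=-\partial_s^2 E_\eps-\kgaet^2 E_\eps+\veltan\partial_s\kgaet$ from \eqref{eq:evolution_curvature}, the integrand becomes
\begin{equation*}
-2\kgaet\,\partial_s^2 E_\eps-\kgaet^3 E_\eps
+2\kgaet\veltan\partial_s\kgaet+\kgaet^2\partial_s\veltan.
\end{equation*}

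The key observation at this stage is that the two tangential terms assemble into a single exact derivative, $2\kgaet\veltan\partial_s\kgaet+\kgaet^2\partial_s\veltan=\partial_s(\veltan\kgaet^2)$. Its integral over $[0,\lgaet]$ equals the boundary value $[\veltan\kgaet^2]_0^{\lgaet}$, which vanishes because $\kgaet=0$ at $s\in\{0,\lgaet\}$ by \eqref{eq:eps_dir_boundary_conditions}. This is precisely where the Dirichlet setting is exploited, and it is the main difference from the closed-curve computation in \cite[Lemma 3.9]{Bellettini_Mantegazza_Novaga:04}: the tangential velocity $\veltan$, which here is not zero, contributes nothing.

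Next I would integrate the term $-2\kgaet\,\partial_s^2 E_\eps$ by parts twice. The first integration produces the boundary term $[-2\kgaet\,\partial_s E_\eps]_0^{\lgaet}$, which vanishes since $\kgaet=0$ at the endpoints; the second produces $[2\partial_s\kgaet\,E_\eps]_0^{\lgaet}$, which vanishes since $E_\eps=0$ at the endpoints by \eqref{eq:vel_zero_sinistra} and \eqref{eq:vel_norm_zero_destra}. What survives is
\begin{equation*}
\frac{d}{dt}\int_0^{\lgaet}\kgaet^2\,ds
=-\int_0^{\lgaet}E_\eps\big(2\partial_s^2\kgaet+\kgaet^3\big)\,ds.
\end{equation*}
Finally I would insert the definition $E_\eps=-\kgaet+\eps\big(2\partial_s^2\kgaet+\kgaet^3\big)$ from \eqref{eq:energy_eps}, expand the product into $2\kgaet\,\partial_s^2\kgaet+\kgaet^4-\eps\big(4(\partial_s^2\kgaet)^2+4\kgaet^3\partial_s^2\kgaet+\kgaet^6\big)$, and integrate the term $2\kgaet\,\partial_s^2\kgaet$ by parts once more; its boundary term again vanishes because $\kgaet=0$ at the endpoints, leaving $-2(\partial_s\kgaet)^2$. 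Collecting the $\eps^0$ and $\eps^1$ contributions then yields exactly \eqref{eq:dt_kappa_2}.

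I do not expect a serious analytic obstacle: all integrations by parts are legitimate because $\gae$ is smooth for positive times, and every boundary term that appears is annihilated by one of the two facts $\kgaet(0)=\kgaet(\lgaet)=0$ and $E_\eps(t,0)=E_\eps(t,\lgaet)=0$. The only point requiring genuine care — and the true novelty relative to \cite{Bellettini_Mantegazza_Novaga:04} — is the bookkeeping of these boundary terms together with the verification that the tangential contribution is an exact derivative integrating to zero.
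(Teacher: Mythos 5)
Your proof is correct, and while it follows the same overall strategy as the paper (differentiate, insert the evolution formulas \eqref{eq:evolution_ds} and \eqref{eq:evolution_curvature}, integrate by parts, and kill boundary terms via the Dirichlet conditions), it uses a genuinely different decomposition that is in fact tidier. The paper substitutes the fully expanded form of $\partial_t\kgaet$ at the outset, so its $\eps$-part contains the fourth-order term $-4\kgaet\partial_s^4\kgaet$; two integrations by parts then leave the boundary contribution $\bigl[2\kgaet\partial_s\kgaet-4\eps\kgaet\partial_s^3\kgaet+4\eps\partial_s\kgaet\partial_s^2\kgaet\bigr]_0^{\lgaet}$, whose last summand can only be discarded after the separate verification \eqref{eq:vanishing_of_third_boundary_term} that $\partial_s^2\kgaet$ vanishes at the endpoints (deduced from $E_\eps=0$ together with $\kgaet=0$ there — exactly the computation your argument folds in implicitly); the paper moreover needs the interior identity $-3\int\kgaet^2(\partial_s\kgaet)^2\,ds=\int\kgaet^3\partial_s^2\kgaet\,ds$ to contract $-12\kgaet^2(\partial_s\kgaet)^2-8\kgaet^3\partial_s^2\kgaet$ into $-4\kgaet^3\partial_s^2\kgaet$. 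You instead keep $E_\eps$ unexpanded, integrate $-2\kgaet\partial_s^2E_\eps$ by parts twice so that every boundary term dies from just $\kgaet=0$ (third condition in \eqref{eq:eps_dir_boundary_conditions}) and $E_\eps=0$ (\eqref{eq:vel_zero_sinistra}, \eqref{eq:vel_norm_zero_destra}), arrive at the clean intermediate identity $\frac{d}{dt}\int_0^{\lgaet}\kgaet^2\,ds=-\int_0^{\lgaet} E_\eps\bigl(2\partial_s^2\kgaet+\kgaet^3\bigr)\,ds$ — which pleasantly mirrors the structure of the dissipation computation in Lemma \ref{lem:dissipation_eq} — and only then expand, so neither the $\partial_s^2\kgaet$-boundary fact nor the contraction identity is ever needed. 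Your treatment of the tangential terms, assembling $2\kgaet\veltan\partial_s\kgaet+\kgaet^2\partial_s\veltan$ into $\partial_s(\veltan\kgaet^2)$ and killing it at the boundary via $\kgaet=0$, is identical to the paper's; your fixed-interval parametrization also legitimately avoids the moving-endpoint term $\kgaet^2(\lgaet)\dot\ell(\gaet)$, which the paper writes down and immediately discards since $\kgaet(\lgaet)=0$, so the two opening displays agree. What each route buys: yours is shorter and minimizes boundary bookkeeping; the paper's expanded route pre-establishes $\partial_s^2\kgaet=0$ at the endpoints (\eqref{eq:der2_kappa_zero_al_bordo}), which is reused later as the base case of the induction in Lemma \ref{lem:derivate_j_k_pari} for the higher-derivative estimates, so its extra work is not wasted in the larger scheme of the paper.
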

\begin{proof}
Using equality \eqref{eq:evolution_ds}, we get
\begin{align*}
 &\frac{d}{dt}\int_0^{\lgaet} \kgaet^2 \ ds 
\\
&\quad=
\int_0^{\lgaet} 2\kgaet\partial_t \kgaet \ ds+
\int_0^{\lgaet}\Big(-\kgaet^4+2\varepsilon \kgaet^3\partial^2_s\kgaet+\varepsilon \kgaet^6+\kgaet^2\partial_s\veltan\Big)ds
\\
&\qquad
+\kgaet^2(\lgaet)\dot \ell(\gaet).
\end{align*}
From \eqref{eq:eps_dir_boundary_conditions} we have $\kgaet(\lgaet)=0$, thus the term 
 $\kgaet^2(\lgaet)\dot \ell(\gaet)=0$.
Using \eqref{eq:evolution_curvature}, and coupling together the terms $2\kgaet\veltan\partial_s\kgaet$ and $\kgaet^2\partial_s\veltan$, we obtain 
\begin{equation}\label{eq:again}
\begin{aligned}
&\frac{d}{dt}\int_0^{\lgaet} \kgaet^2 \ ds   
\\
&\quad=
\int_0^{\lgaet} \Big(2\kgaet\partial^2_s\kgaet+\kgaet^4\Big)ds
\\
&\qquad+\varepsilon\int_0^{\lgaet}\Big(-4 \kgaet\partial^4_s\kgaet-12 \kgaet^2(\partial_s \kgaet)^2-8 \kgaet^3\partial^2_s \kgaet- \kgaet^6\Big) ds
\\
&\qquad +
\int_0^{\lgaet} \partial_s(\veltan\kgaet^2) \ ds.
\end{aligned}
\end{equation}
Again from \eqref{eq:eps_dir_boundary_conditions} 
we deduce that $\displaystyle\int_0^{\lgaet} \partial_s(\veltan\kgaet^2) \ ds=0$.

We now argue as in \cite{Bellettini_Mantegazza_Novaga:04} 
with the difference to be pointed out here  that we need
to take into account the boundary terms.
The first and third terms on the right hand side of \eqref{eq:again}
 depend on higher 
derivatives and, moreover, they lack a sign. 
Therefore, we integrate by parts (with respect to $s$) once for the term without \(\varepsilon\)  and twice for the term 
with \(\varepsilon\) obtaining:
\begin{equation}\label{eq:partial_t_int_kae_squared}
    \begin{aligned}
&\frac{d}{dt}\int_0^{\lgaet} \kgaet^2 \ ds 
\\
&\quad= \int_0^{\lgaet} \Big(-2(\partial_s\kgaet)^2
+\kgaet^4\Big)ds \\
&
\qquad
+\varepsilon\int_0^{\lgaet}\Big(-4(\partial^2_s\kgaet)^2
-12 \kgaet^2(\partial_s \kgaet)^2-8 \kgaet^3\partial^2_s \kgaet 
- 
\kgaet^6
\Big) ds 
\\
& \qquad+ 
\Big[2\kgaet\partial_s\kgaet-4\varepsilon \kgaet\partial^3_s \kgaet+
4\varepsilon 
\partial_s\kgaet\partial^2_s \kgaet
\Big]^{\lgaet}_{0} 
\\
&\quad= \int_0^{\lgaet} \Big(-2(\partial_s\kgaet)^2+\kgaet^4\Big)ds+\varepsilon\int_0^{\lgaet}\Big(-4(\partial^2_s\kgaet)^2
-4 \kgaet^3\partial^2_s \kgaet
- 
\kgaet^6
\Big) ds 
\\         
&\qquad+ 
\Big[2\kgaet\partial_s\kgaet
-
4\varepsilon \kgaet\partial^3_s \kgaet
+4\varepsilon \partial_s\kgaet\partial^2_s \kgaet
\Big]^{\lgaet}_{0}  
    \end{aligned}
\end{equation}
where in the last equality,
taking also into account the third formula in \eqref{eq:eps_dir_boundary_conditions},
 we used 
that \(\displaystyle -3\int_0^{\lgaet} \kgaet^2(\partial_s \kgaet)^2 ds=\int_0^{\lgaet} \kgaet^3\partial^2_s \kgaet ds.\)
   
It remains to show that the contribution of the boundary term is zero.
From \eqref{eq:eps_dir_boundary_conditions} we have $\kgaet(s)=0, s\in\lbrace 0, \lgaet\rbrace$, thus we only need to show that 
    
\begin{equation}\label{eq:vanishing_of_third_boundary_term}
\left[
\partial_s \kgaet\partial^2_s \kgaet
\right]^{\lgaet}_{0} 
=0 \qquad \forall t \in {(0,+\infty)}.
\end{equation}
This follows from the fact that, for $s\in\lbrace 0, \lgaet\rbrace$,
\begin{align*}
    0=E_\eps(t,s)=-\kgaet(s)+\eps\Big(2\partial^2_s \kgaet(s) + \kgaet^3(s)\Big)=2\eps\partial^2_s \kgaet(s)
\end{align*}
 where we have used \eqref{eq:vel_zero_sinistra}, \eqref{eq:vel_norm_zero_destra}, \eqref{eq:energy_eps} and $\kgaet(s)=0, s\in\lbrace 0, \lgaet\rbrace$.
\end{proof}

We now recall the following version of the Gagliardo–Nirenberg inequality
which follows from \cite[Theorem 1]{Nirenberg:96}
  and a scaling argument.
\begin{Theorem}[\textbf{Gagliardo-Nirenberg interpolation inequalities}]
\label{teo:Gagliardo_Nirenberg_interpolation_inequalities}
    Let \(\eta\) be a smooth curve in \(\mathbb{R}^2\) with 
length $L \in (0,+\infty)$ and let \(u\) be a smooth function defined on \(\eta\). Then for every \(j\geq 1\), \(p\in [2,+\infty]\) and \(n\in \lbrace 0,\dots,j-1\rbrace\) we have the estimates
    \begin{equation*}
        ||\partial^n_s u||_{L^p}\leq \tilde{C}_{n,j,p}||\partial^j_s u||^\sigma_{L^2}||u||^{1-\sigma}_{L^2}+\frac{B_{n,j,p}}{L^{j\sigma}}||u||_{L^2}
    \end{equation*}
    where
    \begin{equation*}
        \sigma=\frac{n+1/2-1/p}{j}
    \end{equation*}
    and the constants \(\tilde{C}_{n,j,p}\) and \(B_{n,j,p}\) are independent of \(\eta\). In particular, if \(p=+\infty\),
    \begin{equation*}
        ||\partial^n_s u||_{L^\infty}\leq\tilde{C}_{n,j}||\partial^j_s u||^\sigma_{L^2}||u||^{1-\sigma}_{L^2}+\frac{B_{n,j}}{L^{j\sigma}}||u||_{L^2} \hspace{0.2cm}\text{ with }\hspace{0.2cm} \sigma=\frac{n+1/2}{j}.
    \end{equation*}
\end{Theorem}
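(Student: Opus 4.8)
The plan is to reduce the statement to the classical one-dimensional Gagliardo–Nirenberg inequality on an interval of unit length, and then to recover the explicit dependence on $L$ by a scaling argument. Since all the quantities in the estimate—the arclength derivatives $\partial_s^k u$ and the $L^p$, $L^2$ norms taken with respect to $ds$—are intrinsic to the arclength parametrization, I would regard $u$ simply as a smooth function on $[0,L]$ (or on $\mathbb{R}/L\mathbb{Z}$ if $\eta$ is closed), with $\partial_s$ the ordinary derivative. The geometry of $\eta$ in $\mathbb{R}^2$ thus enters only through its length $L$.

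First I would invoke \cite[Theorem 1]{Nirenberg:96} in the one-dimensional case, with both base exponents equal to $2$, on the fixed reference interval $[0,1]$: for every smooth $v$ on $[0,1]$, every $j\ge 1$, $p\in[2,+\infty]$ and $n\in\{0,\dots,j-1\}$,
\[
\|\partial_\xi^n v\|_{L^p([0,1])}\le \widehat{C}_{n,j,p}\,\|\partial_\xi^j v\|_{L^2([0,1])}^{\sigma}\,\|v\|_{L^2([0,1])}^{1-\sigma}+\widehat{B}_{n,j,p}\,\|v\|_{L^2([0,1])}, \qquad \sigma=\frac{n+1/2-1/p}{j},
\]
where $\widehat{C}_{n,j,p},\widehat{B}_{n,j,p}$ depend only on $n,j,p$. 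This is exactly the interpolation inequality on a bounded interval, the additive lower-order term being the price of working on a bounded domain rather than on all of $\mathbb{R}$. Note that this is precisely the $L=1$ instance of the theorem to be proved, since then $1/L^{j\sigma}=1$.

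Then comes the scaling step. Given $u$ on $[0,L]$, I set $v(\xi):=u(L\xi)$ for $\xi\in[0,1]$, so that $\partial_\xi^k v(\xi)=L^k\,(\partial_s^k u)(L\xi)$, and a change of variables $s=L\xi$ gives, for any $q\in[2,+\infty]$,
\[
\|\partial_\xi^k v\|_{L^q([0,1])}=L^{\,k-1/q}\,\|\partial_s^k u\|_{L^q([0,L])},
\]
with the convention $1/q=0$ when $q=+\infty$. Substituting these identities into the unit-interval inequality and dividing through by $L^{\,n-1/p}$, the product term acquires the factor $L^{\,j\sigma-1/2-n+1/p}$, while the lower-order term carries $L^{\,-1/2-n+1/p}$. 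The choice $\sigma=(n+1/2-1/p)/j$, equivalently $j\sigma=n+1/2-1/p$, makes the first exponent vanish—this is the scale invariance of the product term—and turns the second exponent into $-j\sigma$. This yields the asserted inequality with $\widetilde{C}_{n,j,p}=\widehat{C}_{n,j,p}$ and $B_{n,j,p}=\widehat{B}_{n,j,p}$, independent of $\eta$, and the case $p=+\infty$ follows by setting $1/p=0$, giving $\sigma=(n+1/2)/j$.

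There is no serious obstacle: the whole analytic content is the classical inequality, which I cite, and the rest is bookkeeping of scaling exponents. The only point requiring care is verifying that the stated $\sigma$ is exactly the one for which the product term is scale-invariant, so that the $L$-dependence collapses to the single explicit factor $L^{-j\sigma}$ in front of the lower-order term; a dimensional-analysis check—comparing $[\partial_s^n u]_{L^p}=[u]\,L^{1/p-n}$ with $[\partial_s^j u]_{L^2}^\sigma[u]_{L^2}^{1-\sigma}=[u]\,L^{1/2-j\sigma}$—confirms the computation. If $u$ is vector-valued one argues componentwise, and if $\eta$ is closed the reference estimate is taken on $\mathbb{R}/\mathbb{Z}$ instead of $[0,1]$, with constants still depending only on $n,j,p$.
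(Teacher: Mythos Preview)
Your proposal is correct and follows exactly the route indicated in the paper, which does not give a proof but merely states that the inequality ``follows from \cite[Theorem 1]{Nirenberg:96} and a scaling argument.'' You have supplied precisely that scaling argument, and your exponent bookkeeping is accurate.
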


\begin{Remark}\label{rem:particular_case_of_Gagliardo_Nirenberg}
If
 \(n=0, j=2, p=6\) we get \(\sigma=1/6\) and
\begin{equation*}
    ||u||_{L^6}\leq C||\partial^2_s u||^{\frac{1}{6}}_{L^2}||u||^{\frac{5}{6}}_{L^2}+\frac{C}{L^{\frac{1}{3}}}||u||_{L^2},
\end{equation*}
for some \(C>0\), hence, by means of Young inequality  \(|xy|\leq\frac{1}{a}|x|^a+\frac{1}{b}|y|^b,\) \(1/a+1/b=1\), choosing \(a=b=2\), \(x=\sqrt{2}||\partial^2_s u||^{1/2}_{L^2}\) and \(y=\frac{1}{\sqrt{2}}||u||^{5/2}_{L^2}\), we obtain
\begin{equation}\label{eq:GN_for_u^6}
    \int_\gamma u^6 ds\leq \int_\gamma (\partial^2_s u)^2 ds+C\bigg(\int_\gamma u^2 ds\bigg)^5+\frac{C}{L^2}\bigg(\int_\gamma u^2 ds\bigg)^3.
\end{equation}
If 
 \(n=0, j=1, p=4\) we get \(\sigma=1/4\) and
\begin{equation*}
    ||u||_{L^4}\leq C||\partial_s u||^{\frac{1}{4}}_{L^2}||u||^{\frac{3}{4}}_{L^2}+\frac{C}{L^{\frac{1}{4}}}||u||_{L^2},
\end{equation*}
hence, reasoning as before,
\begin{equation}\label{eq:GN_for_u^4}
    \int_\gamma u^4 ds\leq \int_\gamma (\partial_s u)^2 ds+C\bigg(\int_\gamma u^2 ds\bigg)^3+\frac{C}{L}\bigg(\int_\gamma u^2 ds\bigg)^2.
\end{equation}
\end{Remark}

The next result is proven 
in \cite[Proposition 3.10]{Bellettini_Mantegazza_Novaga:04}, and holds
also for a curve with boundary. The idea of the proof is 
to use Lemma \ref{lem:dt_kappa_2}, adding a 
suitable positive quantity in order to eliminate 
terms whose sign is not known, and then estimating the 
resulting expressions using \eqref{eq:GN_for_u^6} and \eqref{eq:GN_for_u^4}.
\begin{Lemma}\label{lem:partial_t_int_k_squared_leq_C_int_k_squared}
There exists a positive constant $C>0$ independent of $\eps\in(0,1]$ such that
the following estimate holds:
\begin{equation}
\begin{aligned}\label{eq:k^2}
& 
\frac{d}{dt} \int_0^{\lgaet}
\kappa_{\gaet}^2~ds 
\\
\leq& C\bigg (\int_0^{\lgaet} 
\kappa_{\gaet}^2 ds\bigg)^5+C\bigg(\int_0^{\lgaet}\kappa_{\gaet}^2 
ds\bigg)^3
+C\bigg(\int_0^{\lgaet} \kappa_{\gaet}^2 ds\bigg)^2.
\end{aligned}
\end{equation}
\end{Lemma}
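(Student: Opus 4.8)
The plan is to start from the exact evolution identity \eqref{eq:dt_kappa_2} of Lemma \ref{lem:dt_kappa_2}, which already has the boundary terms eliminated, and to read off which summands obstruct an estimate in terms of powers of $\int_0^{\lgaet}\kgaet^2\,ds$. The two genuinely good (negative) terms are $-2(\partial_s\kgaet)^2$ and $-4\eps(\partial^2_s\kgaet)^2$; the term $+\kgaet^4$ and $+\eps\kgaet^6$ are positive but of the right structure to be fed into the Gagliardo--Nirenberg inequalities, while the truly problematic summand is the sign-indefinite $-4\eps\kgaet^3\partial^2_s\kgaet$, which couples the highest available derivative $\partial^2_s\kgaet$ with a cubic power of $\kgaet$.

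First I would dispose of the indefinite term by Young's inequality, absorbing it into the good negative term $-4\eps(\partial^2_s\kgaet)^2$. Writing $-4\eps\kgaet^3\partial^2_s\kgaet\le 2\eps(\partial^2_s\kgaet)^2+2\eps\kgaet^6$ and substituting into \eqref{eq:dt_kappa_2} yields the cleaner bound
\[
\frac{d}{dt}\int_0^{\lgaet}\kgaet^2\,ds\le \int_0^{\lgaet}\Big(-2(\partial_s\kgaet)^2+\kgaet^4\Big)\,ds+\eps\int_0^{\lgaet}\Big(-2(\partial^2_s\kgaet)^2+\kgaet^6\Big)\,ds ,
\]
in which every remaining positive contribution is a pure power of $\kgaet$ of the type handled by Remark \ref{rem:particular_case_of_Gagliardo_Nirenberg}.

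Next I would apply \eqref{eq:GN_for_u^4} and \eqref{eq:GN_for_u^6} with $u=\kgaet$ and $L=\lgaet$ to estimate $\int\kgaet^4$ and $\int\kgaet^6$. The decisive point is that in both inequalities the coefficient of the derivative term is exactly $1$: hence $\int\kgaet^4$ produces a term $+\int(\partial_s\kgaet)^2$ that is more than compensated by the available $-2\int(\partial_s\kgaet)^2$, and $\eps\int\kgaet^6$ produces $+\eps\int(\partial^2_s\kgaet)^2$, more than compensated by $-2\eps\int(\partial^2_s\kgaet)^2$. After this substitution the two leftover derivative integrals are nonpositive and may be discarded. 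The remaining terms are exactly the powers $(\int\kgaet^2)^5$, $(\int\kgaet^2)^3$ and $(\int\kgaet^2)^2$, with prefactors involving $\eps$ and negative powers of $\lgaet$; these are rendered harmless and $\eps$-independent by using $\eps\in(0,1]$ together with the uniform lower bound $\lgaet\ge|\initialpoint-\finalpoint|>0$ from \eqref{eq:lgaet_geq_1}, so that $1/\lgaet$ and $1/\lgaet^2$ stay bounded. Since the Gagliardo--Nirenberg constants are independent of the curve, the resulting constant $C$ is independent of $\eps$, giving \eqref{eq:k^2}. The main obstacle is purely the bookkeeping of this absorption balance: one must check that the single good negative term at each differentiation order is quantitatively large enough to swallow both the indefinite cubic coupling and the interpolation remainder, which is precisely why the Young step is tuned to leave a strict surplus of the negative terms.
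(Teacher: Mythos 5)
Your proof is correct and takes essentially the same route as the paper's (which follows Proposition 3.10 of Bellettini--Mantegazza--Novaga): your Young-inequality step $-4\eps\kgaet^3\partial_s^2\kgaet\le 2\eps(\partial_s^2\kgaet)^2+2\eps\kgaet^6$ is exactly the paper's ``adding a suitable positive quantity'' (namely $2\eps\int(\partial_s^2\kgaet+\kgaet^3)^2\,ds\ge 0$), after which \eqref{eq:GN_for_u^4} and \eqref{eq:GN_for_u^6} are absorbed into the surviving negative terms $-\int(\partial_s\kgaet)^2\,ds$ and $-\eps\int(\partial_s^2\kgaet)^2\,ds$, with $\eps\le 1$ and \eqref{eq:lgaet_geq_1} (with $\initialpoint\neq\finalpoint$) controlling $1/\lgaet$. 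One cosmetic slip only: in the identity \eqref{eq:dt_kappa_2} the term $\eps\kgaet^6$ carries a \emph{minus} sign, and the positive $\eps\int\kgaet^6\,ds$ arises only after your Young step --- which your displayed inequality handles correctly.
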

\bigskip
{\it Proof of Proposition \ref{prop:uniform_estimate_of_k_square}}
    The statement follows by Lemma \ref{lem:partial_t_int_k_squared_leq_C_int_k_squared}, estimate \eqref{eq:lgaet_geq_1} and by the Gronwall's lemma.
\qed

\bigskip

\nada{
\begin{Remark}\label{rem:no_possible_estimate_on_derivative}
If we believe in the convergence of \eqref{eq:eps_dir_motion_equation} to the mean curvature flow and of
\eqref{eq:eps_dir_boundary_conditions}
to the Dirichlet boundary conditions, 
we cannot hope for an $\eps-L^2$-uniform estimate of $\partial_s
\kgaet$,
\begin{equation}\label{eq:uniform_estimate_of_k_s_square}
\sup_{\eps \in (0,1)} \int_0^{\lgaet} (\partial_s \kappa_{\gaet})^2~ds < +\infty.
\end{equation}
Indeed, $\sup_\eps \int_\gamma (\partial_s \kappa_{\gaet})^2~ds < +
\infty$, coupled with \eqref{eq:uniform_estimate_of_k_square}
would yield a uniform $L^\infty$-estimate of $\kgaet$,
which would imply that the condition $\kgaet=0$ on $\{0, \lgaet\}$
must be preserved in the limit. 
 The loss of 
 estimate \eqref{eq:uniform_estimate_of_k_s_square}
is in striking contrast
with the case of closed curves \cite{Bellettini_Mantegazza_Novaga:04}, where it is shown that there is a uniform
bound for the $L^2$-norm of the derivative of $\kgaet$ of any order.
{}From the analytical point of view, it turns out that when trying to
get \eqref{eq:uniform_estimate_of_k_s_square} one get several
boundary terms that cannot be estimated. This leads
to the idea of finding a uniform estimate of the $L^2$-norm of $\kgaet^2$.
\end{Remark}
}
\subsection{\texorpdfstring{\(\eps\)-uniform estimates of \(\Vert \partial^j_s\kappa_{\gae}\Vert_2\)}{epsilon-uniform estimates of higher derivatives}}

We deal now with the higher derivatives of the curvature, 
obtaining this crucial result:

\begin{Proposition}\label{prop:uniform_estimate_of_higher_derivatives_square}
There exists $\Tmax >0$ such that, for any $T \in (0, \Tmax)$ and $j\in\mathbb{N}$
\begin{equation}\label{eq:uniform_estimate_of_higher_derivatives_square}
\sup_{\eps\in(0,1]}
\sup_{t\in[0,T]}
\int_0^{\ell(\gaet)} (\partial^j_s\kgaet)^2~ds < +\infty.
\end{equation}
\end{Proposition}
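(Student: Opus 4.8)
The plan is to argue by induction on $j$, taking Proposition \ref{prop:uniform_estimate_of_k_square} (the case $j=0$) as the base step and the resulting uniform $L^2$-control of $\kgaet$ as the driving a priori bound, with $\Tmax$ the same as there. Fix $T<\Tmax$, so that $\sup_{\eps,\,t\in[0,T]}\int_0^{\lgaet}\kgaet^2\,ds<+\infty$, and suppose inductively that $\int_0^{\lgaet}(\partial_s^i\kgaet)^2\,ds$ is bounded uniformly in $\eps\in(0,1]$ and $t\in[0,T]$ for every $i\le j-1$. The first task is to compute $\frac{d}{dt}\int_0^{\lgaet}(\partial_s^j\kgaet)^2\,ds$. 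Differentiating the curvature evolution \eqref{eq:evolution_curvature} $j$ times in $s$, commuting $\partial_t$ and $\partial_s$ through \eqref{eq:commutation_rule}, and using the measure evolution \eqref{eq:evolution_ds}, one is led to an identity of the schematic form
\begin{equation*}
\frac{d}{dt}\int_0^{\lgaet}(\partial_s^j\kgaet)^2\,ds
= -2\int_0^{\lgaet}(\partial_s^{j+1}\kgaet)^2\,ds
-4\eps\int_0^{\lgaet}(\partial_s^{j+2}\kgaet)^2\,ds
+\int_0^{\lgaet}\mathfrak{p}\,ds+\mathcal{B},
\end{equation*}
where the two leading terms, obtained after one (resp.\ two) integrations by parts of the second- (resp.\ fourth-) order contributions of $\partial_t\kgaet$, are the good dissipative terms; $\mathfrak{p}$ collects polynomial expressions in $\kgaet$ and its $s$-derivatives up to order $j+1$ in the $\eps$-free part and up to order $j+2$ in the $\eps$-weighted part (simplified by integration-by-parts identities analogous to the one used in Lemma \ref{lem:dt_kappa_2}); and $\mathcal{B}$ gathers all the boundary contributions produced by these integrations by parts.

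The tangential velocity enters $\mathfrak{p}$ through the transport term $\veltan\partial_s\kgaet$ in \eqref{eq:evolution_curvature} and through $\partial_s\veltan$ in \eqref{eq:evolution_ds}. I would move all such contributions into a divergence plus a polynomial remainder: after integration by parts the top-order transport term contributes $-\tfrac12\int_0^{\lgaet}\partial_s\veltan\,(\partial_s^j\kgaet)^2\,ds$ up to a boundary piece, and \eqref{eq:velocita_tangenziale} expresses $\partial_s\veltan$ as $-E_\eps\kgaet$, a polynomial in $\kgaet$, $\partial_s^2\kgaet$ and $\kgaet^3$; thus these terms are reabsorbed into $\mathfrak{p}$. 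The decisive point, and the difference with the closed-curve case of \cite{Bellettini_Mantegazza_Novaga:04}, is to show that $\mathcal{B}=0$. Each boundary monomial is a product of $s$-derivatives of $\kgaet$ evaluated at $s\in\{0,\lgaet\}$ in which the orders of two of the factors sum to an odd number (for instance $[\partial_s^j\kgaet\,\partial_s^{j+1}\kgaet]$ of total order $2j+1$ from the $\eps$-free term, and $[\partial_s^j\kgaet\,\partial_s^{j+3}\kgaet]$, $[\partial_s^{j+1}\kgaet\,\partial_s^{j+2}\kgaet]$ of total order $2j+3$ from the $\eps$-term), so each such monomial contains at least one even-order factor $\partial_s^{2k}\kgaet$. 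Hence $\mathcal{B}$ vanishes once one knows that $\partial_s^{2k}\kgaet=0$ at $s\in\{0,\lgaet\}$ for every $k$. This generalizes the facts $\kgaet=0$ and $\partial_s^2\kgaet=0$ already exploited in Lemma \ref{lem:dt_kappa_2}, and I would establish it by a separate induction on $k$, differentiating in time the boundary identities $\kgaet=0$ and $E_\eps=0$ (valid for all $t$ by \eqref{eq:eps_dir_boundary_conditions}, \eqref{eq:vel_zero_sinistra}, \eqref{eq:vel_norm_zero_destra}) and tracking the tangential velocity, which vanishes at the left endpoint and equals $-\dot\ell(\gaet)$ at the right endpoint by \eqref{eq:vel_tan_destra}.

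With $\mathcal{B}=0$, it remains to estimate $\int_0^{\lgaet}\mathfrak{p}\,ds$. Here I would apply the Gagliardo--Nirenberg inequalities of Theorem \ref{teo:Gagliardo_Nirenberg_interpolation_inequalities}, in the spirit of \eqref{eq:GN_for_u^6} and \eqref{eq:GN_for_u^4}, to each monomial, bounding it by $\delta\int_0^{\lgaet}(\partial_s^{j+1}\kgaet)^2\,ds$ for the $\eps$-free monomials and by $\eps\,\delta\int_0^{\lgaet}(\partial_s^{j+2}\kgaet)^2\,ds$ for the $\eps$-weighted ones, plus a constant depending only on $\delta$, on the lower-order quantities $\int_0^{\lgaet}(\partial_s^i\kgaet)^2\,ds$ for $i\le j$, and on the length, which is bounded above and below by \eqref{eq:upper_bound_on_the_length} and \eqref{eq:lgaet_geq_1}. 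Choosing $\delta$ small enough, the high-order pieces are absorbed by the two good terms, and since $\eps\le 1$ all constants are $\eps$-independent. The inductive hypothesis renders the remainder at most linear in $\int_0^{\lgaet}(\partial_s^j\kgaet)^2\,ds$ with equibounded coefficients, so a Gronwall argument on $[0,T]$ closes the estimate; the value at $t=0$ is the fixed number $\int(\partial_s^j\kappa_{\inidat})^2\,ds$, identical for all $\eps$, finite for $j$ within the regularity of $\inidat$, while for the remaining orders one works on $[\delta,T]$ using parabolic smoothing (or invokes the smooth, compatible initial datum of the second part of Theorem \ref{teo:asymptotic_convergence}).

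The main obstacle is precisely the vanishing of $\mathcal{B}$, that is, the assertion that all even-order arclength derivatives of the curvature vanish at the two endpoints. Proving this requires controlling the tangential velocity and its $s$-derivatives at the \emph{moving} right endpoint, where $\veltan=-\dot\ell(\gaet)\neq 0$; this is where the boundary analysis is genuinely harder than in the closed case. A secondary, purely bookkeeping, difficulty is to separate the $\eps$-weighted monomials of $\mathfrak{p}$ so that they are absorbed by $-4\eps\int_0^{\lgaet}(\partial_s^{j+2}\kgaet)^2\,ds$ and never by the $\eps$-free good term, ensuring that no negative power of $\eps$ is generated and that the final bounds are uniform in $\eps\in(0,1]$.
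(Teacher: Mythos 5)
Your proposal is correct and follows essentially the same route as the paper: the same energy identity with the dissipative terms $-2\int_0^{\lgaet}(\partial_s^{j+1}\kgaet)^2\,ds-4\eps\int_0^{\lgaet}(\partial_s^{j+2}\kgaet)^2\,ds$, the same key lemma that all even-order arclength derivatives of $\kgaet$ vanish at both endpoints (proved, exactly as you sketch, by induction on the order via the time-differentiated boundary identities, the parity argument showing every $\qol$-monomial of odd total weight contains an even-order factor, and the cancellation $\veltan(t,\lgaet)=-\dot\ell(\gaet)$ at the moving endpoint --- note no $s$-derivatives of $\veltan$ at the boundary are actually needed), followed by Gagliardo--Nirenberg absorption into the good terms. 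The only deviations are minor: the paper combines the tangential contributions into the exact divergence $\int_0^{\lgaet}\partial_s\big(\veltan(\partial_s^j\kgaet)^2\big)\,ds$ (which cancels against the $\dot\ell$ endpoint term) rather than substituting $\partial_s\veltan=-E_\eps\kgaet$ into the polynomial remainder, and it interpolates all remainders against $\int_0^{\lgaet}\kgaet^2\,ds$ alone, obtaining $\frac{d}{dt}\int_0^{\lgaet}(\partial_s^j\kgaet)^2\,ds\le Z^j\big(\int_0^{\lgaet}\kgaet^2\,ds\big)$ and concluding by direct time integration with Proposition \ref{prop:uniform_estimate_of_k_square}, so that neither your induction on $j$ nor Gronwall is needed --- though your closure is equally valid, and your treatment of the $t=0$ regularity of $\inidat$ is in fact more careful than the paper's.
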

In order to prove Proposition \ref{prop:uniform_estimate_of_higher_derivatives_square} we need some preliminaries.

\begin{Definition}\label{def:q}
For $l\in\mathbb{N}$, we denote by $\qol^r(\partial^l_s\kappa)$ a polynomial in $\kappa, \dots, \partial_s^l\kappa$ with constant coefficients in $\mathbb{R}$ such that every monomial it contains is of the form
\begin{equation*}
    \prod_{i=1}^N \partial^{j_i}_s\kappa \qquad \text{with } 0\leq j_i\leq l, \ N\geq 1 \text{ and }  r=\sum_{i=1}^N (j_i+1).
\end{equation*}
\end{Definition}
For the rest of this section all polynomials in the curvature $\kgaet$ and its derivatives are completely contracted, that is they belong to the family $\qol^r(\partial^l_s\kgaet)$ as defined above.

The following formula is proved in \cite[Lemma 2.19]{Mantegazza-Pluda_Pozzetta:04}:
\begin{Lemma}
Let $\eps\in(0,1]$ and $\gae:\domgae \to\mathbb{R}^2$ be a 
curve evolving by \eqref{eq:evol_eq_completa}. Then,  for any $j\in\mathbb{N}$, we have
\begin{equation}\label{eq:evolution_j_derivata_curvatura}
    \begin{aligned}
     \partial_t\partial^j_s\kgaet
     &=\partial^{j+2}_s\kgaet+\qol^{j+3}(\partial^j_s\kgaet)
     -2\eps\partial^{j+4}_s\kgaet-5\eps\kgaet^2\partial^{j+2}_s\kgaet
     \\
     &\quad
     +\eps\qol^{j+5}(\partial^{j+1}_s\kgaet)+\veltan\partial^{j+1}_s\kgaet.
    \end{aligned}
\end{equation}
\end{Lemma}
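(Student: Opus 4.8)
The plan is to argue by induction on $j$, using the commutation rule \eqref{eq:commutation_rule} to pass from the formula for $\partial^j_s\kgaet$ to the one for $\partial^{j+1}_s\kgaet$, while keeping careful track of the polynomial type $\qol^r(\partial^l_s\kgaet)$ of each lower-order term. The base case $j=0$ is nothing but \eqref{eq:evolution_curvature}, once one observes that its terms fit the announced structure: $\kgaet^3$ is the only completely contracted monomial of type $\qol^3(\kgaet)$, the terms $-6\eps\kgaet(\partial_s\kgaet)^2$ and $-\eps\kgaet^5$ are both of type $\eps\qol^5(\partial_s\kgaet)$ (indeed $1+2+2=5$ and $5\cdot 1=5$), and the remaining terms $\partial^2_s\kgaet$, $-2\eps\partial^4_s\kgaet$, $-5\eps\kgaet^2\partial^2_s\kgaet$, $\veltan\partial_s\kgaet$ coincide verbatim with the terms of \eqref{eq:evolution_j_derivata_curvatura} at $j=0$.

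For the inductive step I would first record two elementary facts about the symbols $\qol^r(\partial^l_s\kgaet)$: differentiation raises the type, $\partial_s\qol^r(\partial^l_s\kgaet)=\qol^{r+1}(\partial^{l+1}_s\kgaet)$ (each application of the product rule increases one derivative order by one, hence raises $r$ by one), and multiplication adds types. Assuming \eqref{eq:evolution_j_derivata_curvatura} at step $j$, the commutation rule \eqref{eq:commutation_rule} applied to $\partial^j_s\kgaet$ gives
\[
\partial_t\partial^{j+1}_s\kgaet=\partial_s\partial_t\partial^j_s\kgaet+\big(-\kgaet E_\eps-\partial_s\veltan\big)\partial^{j+1}_s\kgaet,
\]
so I must differentiate the right-hand side of the inductive hypothesis and add the commutator contribution. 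Differentiating term by term produces the three leading terms $\partial^{(j+1)+2}_s\kgaet$, $-2\eps\partial^{(j+1)+4}_s\kgaet$ and $-5\eps\kgaet^2\partial^{(j+1)+2}_s\kgaet$ (the last one together with a lower-order remainder $-10\eps\kgaet\partial_s\kgaet\partial^{j+2}_s\kgaet$ of type $\eps\qol^{j+6}(\partial^{j+2}_s\kgaet)$), upgrades the non-$\eps$ polynomial to $\qol^{(j+1)+3}(\partial^{j+1}_s\kgaet)$ and the $\eps$-polynomial to $\eps\qol^{(j+1)+5}(\partial^{(j+1)+1}_s\kgaet)$, and turns the transport term into $(\partial_s\veltan)\partial^{j+1}_s\kgaet+\veltan\partial^{(j+1)+1}_s\kgaet$.

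The key observation is then that the term $(\partial_s\veltan)\partial^{j+1}_s\kgaet$ coming from the transport term exactly cancels the $-(\partial_s\veltan)\partial^{j+1}_s\kgaet$ contributed by the commutator, so that the tangential velocity survives only through the transport term $\veltan\partial^{(j+1)+1}_s\kgaet$, as required; notably this cancellation is purely structural and does not use the explicit expression \eqref{eq:velocita_tangenziale} of $\veltan$. It remains to absorb the leftover commutator term $-\kgaet E_\eps\,\partial^{j+1}_s\kgaet$. Expanding $-\kgaet E_\eps=\kgaet^2-2\eps\kgaet\partial^2_s\kgaet-\eps\kgaet^4$ via \eqref{eq:energy_eps}, the piece $\kgaet^2\partial^{j+1}_s\kgaet$ is of type $\qol^{(j+1)+3}(\partial^{j+1}_s\kgaet)$ and joins the non-$\eps$ polynomial, while $-2\eps\kgaet\partial^2_s\kgaet\partial^{j+1}_s\kgaet$ and $-\eps\kgaet^4\partial^{j+1}_s\kgaet$ are of type $\eps\qol^{(j+1)+5}(\partial^{(j+1)+1}_s\kgaet)$ and join the $\eps$-polynomial (their highest derivative order, $\max\{2,j+1\}$, never exceeds $(j+1)+1$).

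The argument is essentially bookkeeping; the step I expect to require the most care is precisely the type accounting for $\qol^r(\partial^l_s\kgaet)$ under differentiation and multiplication, and in particular verifying that none of the lower-order remainders reaches derivative order $(j+1)+2$. This is what guarantees that the three explicitly displayed leading terms $\partial^{(j+1)+2}_s\kgaet$, $-2\eps\partial^{(j+1)+4}_s\kgaet$ and $-5\eps\kgaet^2\partial^{(j+1)+2}_s\kgaet$ retain their exact coefficients rather than being swallowed into the generic polynomials; collecting everything then yields \eqref{eq:evolution_j_derivata_curvatura} at step $j+1$ and closes the induction.
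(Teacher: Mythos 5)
Your proof is correct. The paper does not prove this lemma itself (it delegates it to \cite[Lemma 2.19]{Mantegazza-Pluda_Pozzetta:04}), and your argument --- induction on $j$ via the commutation rule \eqref{eq:commutation_rule} with base case \eqref{eq:evolution_curvature}, the structural cancellation of the two $(\partial_s\veltan)\,\partial^{j+1}_s\kgaet$ contributions so that $\veltan$ survives only as pure transport, and the $\qol^r$ type bookkeeping (including the check that no remainder reaches derivative order $j+3$, which protects the exact coefficients of $\partial^{(j+1)+2}_s\kgaet$, $-2\eps\partial^{(j+1)+4}_s\kgaet$ and $-5\eps\kgaet^2\partial^{(j+1)+2}_s\kgaet$) --- is precisely the standard derivation used in that reference.
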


\myindent From the conditions (see \eqref{eq:eps_dir_boundary_conditions}, \eqref{eq:vel_zero_sinistra}, \eqref{eq:vel_norm_zero_destra})
\begin{equation}\label{eq:kappa_zero_al_bordo}
\kgaet=0 \qquad {\mathrm at}~ \{0, \lgaet\},
\end{equation}
\begin{equation*}\label{eq:velocita_zero_al_bordo}
\langle\dert \gaet,\normgaet\rangle= E_\eps =0 \qquad {\mathrm at}~ \{0, \lgaet\},
\end{equation*}
 and using the 
expression of $E_\eps$ in \eqref{eq:energy_eps}
it follows 
\begin{equation}\label{eq:der2_kappa_zero_al_bordo}
\partial_s^2 \kgaet =0 \qquad {\mathrm at}~
\{0, \lgaet\}, \quad t\in {(0,+\infty)}.
\end{equation}
Also 
from the condition $\partial_t \kgaet(0)=0$, and from 
\begin{equation}\label{eq:velocita_zero_al_bordo_bis}
   \langle\dert \gae(t,0),\taugaet(0)\rangle= \veltan(t,0)=0 
\end{equation}
 (see \eqref{eq:vel_zero_sinistra}), using \eqref{eq:kappa_zero_al_bordo},
\eqref{eq:der2_kappa_zero_al_bordo}
 and the evolution equation 
\eqref{eq:evolution_curvature} it follows 
\begin{equation}\label{eq:der4_kappa_sinistra}
\partial_s^4 \kgaet(0) =0,  \quad t\in (0,+\infty).
\end{equation}
Looking now at $s=\lgaet$, from the condition
\begin{align*}
        0
        &=\frac{d}{dt}\kgaet(\lgaet)=\partial_t\kgaet(\lgaet)+\partial_s\kgaet(\lgaet)\dot \ell(\gaet),
    \end{align*}
and from the evolution equation \eqref{eq:evolution_curvature}, \eqref{eq:kappa_zero_al_bordo}, \eqref{eq:der2_kappa_zero_al_bordo} and 
\begin{equation}\label{eq:vel_tan_destra_bis}
    \langle \partial_t\gae(t,\lgaet),\tau_{\gaet}(\lgaet)\rangle=\veltan(t,\lgaet)=-\dot\ell(\gaet)
\end{equation}
(see \eqref{eq:vel_tan_destra}),
we obtain
\begin{equation}\label{eq:der4_kappa_destra}
    \begin{aligned}
        0= & -2\eps\partial^4_s\kgaet(\lgaet)+\veltan(t,\lgaet)\partial_s\kgaet(\lgaet)+\partial_s\kgaet(\lgaet)\dot \ell(\gaet)
        \\
        &=-2\eps\partial^4_s\kgaet(\lgaet)+\veltan(t,\lgaet)\partial_s\kgaet(\lgaet)-\veltan(t,\lgaet)\partial_s\kgaet(\lgaet)
        \\
        &=-2\eps\partial^4_s\kgaet(\lgaet).
    \end{aligned}
\end{equation}
Combining \eqref{eq:der4_kappa_sinistra} and \eqref{eq:der4_kappa_destra} we conclude 
\begin{equation}\label{eq:der4_kappa_zero_al_bordo}
\partial_s^4 \kgaet =0,  \qquad {\mathrm at}~
\{0, \lgaet\}, \quad t\in (0,+\infty).
\end{equation}
For the following lemma see also \cite[Lemma 4.8]{Mantegazza-Pluda_Pozzetta:04} and \cite[Lemma 2.5]{Novaga-Okabe:13}.
\begin{Lemma}\label{lem:derivate_j_k_pari}
    For any $t\in(0,+\infty)$
\begin{equation}\label{eq:first_remark}
\partial^j_s \kgaet =0 \qquad {\mathrm at}~ 
\{0,\lgaet\} \quad \forall j \mathrm{ ~even.}
\end{equation}
\end{Lemma}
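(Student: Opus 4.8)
The plan is to argue by strong induction on the even order $j=2m$, using the evolution equation \eqref{eq:evolution_j_derivata_curvatura} read at a carefully chosen index together with the boundary conditions. The cases $j=0,2,4$ are precisely \eqref{eq:kappa_zero_al_bordo}, \eqref{eq:der2_kappa_zero_al_bordo} and \eqref{eq:der4_kappa_zero_al_bordo}, so they serve as the base of the induction. Fix $m\ge 2$ and assume, as inductive hypothesis, that $\partial^{2i}_s\kgaet=0$ at $\{0,\lgaet\}$ for every $0\le i\le m$ and every $t\in(0,+\infty)$; the goal is to deduce $\partial^{2m+2}_s\kgaet=0$ at $\{0,\lgaet\}$. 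The key idea is to read \eqref{eq:evolution_j_derivata_curvatura} with the index $j=2m-2$, so that the highest-order spatial derivative that appears is exactly $-2\eps\,\partial^{2m+2}_s\kgaet$:
\begin{equation*}
\begin{aligned}
\partial_t\partial^{2m-2}_s\kgaet
&=\partial^{2m}_s\kgaet+\qol^{2m+1}(\partial^{2m-2}_s\kgaet)-2\eps\,\partial^{2m+2}_s\kgaet \\
&\quad-5\eps\,\kgaet^2\,\partial^{2m}_s\kgaet+\eps\,\qol^{2m+3}(\partial^{2m-1}_s\kgaet)+\veltan\,\partial^{2m-1}_s\kgaet .
\end{aligned}
\end{equation*}
I then intend to show that, evaluated at $\{0,\lgaet\}$, every term on the right-hand side vanishes except $-2\eps\,\partial^{2m+2}_s\kgaet$, while the left-hand side is controlled through the boundary conditions; solving the resulting identity gives $\partial^{2m+2}_s\kgaet=0$.

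The terms $\partial^{2m}_s\kgaet$ and $5\eps\,\kgaet^2\partial^{2m}_s\kgaet$ vanish at the boundary directly from the inductive hypothesis and from \eqref{eq:kappa_zero_al_bordo}. The crucial point is the vanishing of the contracted polynomials $\qol^{2m+1}(\partial^{2m-2}_s\kgaet)$ and $\qol^{2m+3}(\partial^{2m-1}_s\kgaet)$, which I would establish by a parity (weight) argument. By Definition \ref{def:q}, every monomial has the form $\prod_{i=1}^N\partial^{j_i}_s\kgaet$ with $\sum_{i=1}^N(j_i+1)=r$, where $r=2m+1$ (respectively $r=2m+3$) is \emph{odd}. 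If all the orders $j_i$ were odd, then each $j_i+1$ would be even and so would their sum $r$, a contradiction; hence at least one factor has even order $j_i$, and since $j_i\le 2m-1$ forces that even order to be $\le 2m-2<2m$, that factor vanishes at the boundary by the inductive hypothesis. Consequently the whole monomial vanishes, and therefore so do both polynomials at $\{0,\lgaet\}$. This parity observation, combined with the completely contracted structure of the nonlinear terms, is the heart of the argument.

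It then remains to treat $\veltan\,\partial^{2m-1}_s\kgaet$ and the left-hand side at each endpoint. At $s=0$ one has $\veltan(t,0)=0$ by \eqref{eq:velocita_zero_al_bordo_bis}, while $\partial^{2m-2}_s\kgaet(t,0)=0$ for all $t$ by the inductive hypothesis, so that $\partial_t\partial^{2m-2}_s\kgaet(t,0)=0$; the identity collapses to $0=-2\eps\,\partial^{2m+2}_s\kgaet(0)$, giving $\partial^{2m+2}_s\kgaet(0)=0$. At $s=\lgaet$ the endpoint moves, so I would differentiate $\partial^{2m-2}_s\kgaet(t,\lgaet)=0$ in time to obtain $\partial_t\partial^{2m-2}_s\kgaet(\lgaet)=-\partial^{2m-1}_s\kgaet(\lgaet)\,\dot \ell(\gaet)$, and use $\veltan(t,\lgaet)=-\dot \ell(\gaet)$ from \eqref{eq:vel_tan_destra_bis}. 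Substituting both into the evolution identity, the two contributions $\veltan\,\partial^{2m-1}_s\kgaet$ and $\partial^{2m-1}_s\kgaet\,\dot \ell(\gaet)$ cancel exactly, leaving $0=-2\eps\,\partial^{2m+2}_s\kgaet(\lgaet)$ and hence $\partial^{2m+2}_s\kgaet(\lgaet)=0$, which closes the induction.

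I expect the main obstacle to be the bookkeeping at the moving endpoint $s=\lgaet$, namely matching the time derivative of the boundary condition against the tangential-velocity term so that the unwanted first-derivative contributions cancel precisely, together with carrying the parity argument through in full generality. Both steps generalize, but must be argued with care beyond, the explicit base-case computations \eqref{eq:der4_kappa_sinistra}--\eqref{eq:der4_kappa_destra}.
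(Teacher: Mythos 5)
Your proposal is correct and follows essentially the same route as the paper's proof: induction on the even order, reading the evolution equation \eqref{eq:evolution_j_derivata_curvatura} at index $2m-2$ so the $\eps$-term isolates $\partial^{2m+2}_s\kgaet$, the parity argument on $\sum_i(j_i+1)$ odd to kill every monomial of $\qol^{2m+1}$ and $\qol^{2m+3}$ via the inductive hypothesis, and the exact cancellation at the moving endpoint between $\veltan\,\partial^{2m-1}_s\kgaet$ and the chain-rule term $\partial^{2m-1}_s\kgaet\,\dot\ell(\gaet)$ using \eqref{eq:vel_tan_destra_bis}. Your explicit chain-rule bookkeeping at $s=\lgaet$ is in fact slightly cleaner than the paper's (which has a small typographical slip in the differentiated boundary identity), but the argument is the same.
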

\begin{proof}We write $j=2n$, and we argue by induction on $n$. 
    The first step of induction $n=1$ is given by \eqref{eq:der2_kappa_zero_al_bordo}.
    Let $n\in\mathbb{N}$, $n\geq 2 $ and suppose that $\partial^{2m}_s\kgaet(0)=\partial^{2m}_s\kgaet(\lgaet)=0$ holds for any natural number $m\leq n$. 
    We have to show that 
    \begin{align}\label{eq:passo_induttivo}
        \partial^{2(n+1)}_s\kgaet=0 \qquad {\textrm at}~
\{0, \lgaet\}.
    \end{align}
    Using  \eqref{eq:evolution_j_derivata_curvatura} with $j=2(n-1)$, we have, at $s=0$,
\begin{align*}
    0=\partial_t\partial^{2(n-1)}_s\kgaet
     &=\partial^{2n}_s\kgaet
     +\qol^{2n+1}(\partial^{2n-2}_s\kgaet)
     -2\eps\partial^{2(n+1)}_s\kgaet
     -5\eps\kgaet^2\partial^{2n}_s\kgaet
     \\
     &\quad
     +\eps\qol^{2n+3}(\partial^{2n-1}_s\kgaet)
     +\veltan\partial^{2n-1}_s\kgaet
     \\
     &=\qol^{2n+1}(\partial^{2n-2}_s\kgaet)
     -2\eps\partial^{2(n+1)}_s\kgaet
     +\eps\qol^{2n+3}(\partial^{2n-1}_s\kgaet),
\end{align*}
where we use \eqref{eq:kappa_zero_al_bordo} and \eqref{eq:velocita_zero_al_bordo_bis}, 
and the induction hypothesis which ensures 
$\partial^{2n}_s\kgaet(0)=0$.

 Using again the induction hypothesis and \eqref{eq:evolution_j_derivata_curvatura} with $j=2(n-1)$, we have, at $s=\lgaet$,
\begin{align*}
    0&=\frac{d}{dt}\partial^{2(n-1)}_s\kgaet=\partial_t\partial^{2(n-1)}_s\kgaet
     +\partial^{2(n-1)}_s\kgaet \dot\ell(\gaet)\\
     &=\partial^{2n}_s\kgaet
     +\qol^{2n+1}(\partial^{2n-2}_s\kgaet)
     -2\eps\partial^{2(n+1)}_s\kgaet
     -5\eps\kgaet^2\partial^{2n}_s\kgaet
     \\
     &\quad
     +\eps\qol^{2n+3}(\partial^{2n-1}_s\kgaet)
     +\veltan\partial^{2n-1}_s\kgaet
     -\veltan\partial^{2n-1}_s\kgaet
     \\
     &=\qol^{2n+1}(\partial^{2n-2}_s\kgaet)
     -2\eps\partial^{2(n+1)}_s\kgaet
     +\eps\qol^{2n+3}(\partial^{2n-1}_s\kgaet),
\end{align*}
where we use also \eqref{eq:vel_tan_destra_bis}. 
Now we observe that 
$$\qol^{2n+1}(\partial^{2n-2}_s\kgaet)=0, \quad \qol^{2n+3}(\partial^{2n-1}_s\kgaet)=0 \quad {\textrm at}~
\{0, \lgaet\}.$$ 
Actually, we shall prove more, namely that each monomial
of $\qol^{2n+1}(\partial^{2n-2}_s\kgaet)$ and of 
\\
$ \qol^{2n+3}(\partial^{2n-1}_s\kgaet)$ vanishes at $\{0, \lgaet\}$.
Indeed, recalling Definition \ref{def:q}, we have that $\qol^{2n+1}(\partial^{2n-2}_s\kgaet)$ is a polynomial in $\kgaet, \dots,\partial^{2n-2}_s\kgaet$ such that every of its monomials is of the form 
\begin{equation*}
    \prod_{i=1}^N \partial^{j_i}_s\kgaet \qquad \text{with } 0\leq j_i\leq 2n-2, \text{ for some }\ N\geq 1 
\end{equation*}
and 
\begin{equation}\label{eq:2n+1}
     2n+1=\sum_{i=1}^N (j_i+1).
\end{equation}
Now, we observe that it is not possible that all indices $j_i$, for $i\in\{1,\dots, N\}$ are odd, as a direct consequence of \eqref{eq:2n+1}. Therefore, there exists at least one index $i\in\{1,\dots,N\}$ such that $j_i$ is even. Since $j_i \leq 2n-2$, the inductive hypothesis implies that $\partial_s^{j_i} \kgaet = 0$ at $\{0,\lgaet\}$. Consequently, the entire monomial vanishes.

Similarly, monomials of $\qol^{2n+3}(\partial^{2n-1}_s\kgaet)$ are of the form 
\begin{equation*}
    \prod_{i=1}^N \partial^{j_i}_s\kgaet \qquad \text{with } 0\leq j_i\leq 2n-1, \text{ for some } N\geq 1 
\end{equation*}
and 
\begin{equation}\label{eq:2n+3}
     2n+3=\sum_{i=1}^N (j_i+1).
\end{equation}
As in the previous case, it is not possible that all indices  $j_i$, for $i\in\{1,\dots, N\}$ are odd,
since this would contradict \eqref{eq:2n+3}. Therefore, there exists at least one index $i\in\{1,\dots,N\}$ such that $j_i$ is even. Since $j_i \leq 2n-1$, the inductive hypothesis implies that $\partial_s^{j_i} \kgaet = 0$ at $\{0,\lgaet\}$ and the entire monomial vanishes. This completes the proof of \eqref{eq:passo_induttivo} and \eqref{eq:first_remark} follows. 
\end{proof}

\nada{
\begin{Lemma}\label{lem:E}
Let $n\geq 2$ be an integer. Suppose 
\begin{equation}\label{eq:ipotesi_induttiva}
\partial_s^j E_\eps=0 \ {\textrm at} ~\{0, \lgaet\} \qquad \forall
j \in \{2, \dots, 2n-2\},  j {\textrm ~ even}.
\end{equation} 
Then 
\begin{equation}\label{eq:tesi_induttiva}
\partial_s^{2n-2} E_\eps=0 \ {\textrm at} ~\{0, \lgaet\}.
\end{equation} 
\end{Lemma}

\begin{proof}
Firstly we prove by induction that 
\begin{equation}\label{eq:first_remark}
\partial^j_s \kgaet =0 \qquad {\textrm at}~ 
\{0,\lgaet\} \quad \forall j \in \{2, \dots, 2n\}, j {\textrm ~even.}
\end{equation}
The first step of the induction is given by \eqref{eq:der2_kappa_zero_al_bordo}. Let $n\in\mathbb{N}$ and suppose that $\partial^{2n}_s\kgaet(0)=\partial^{2n}_s\kgaet(\lgaet)=0$ holds for any natural number $m\leq n$. Then using \eqref{eq:evolution_j_derivata_curvatura} we have 
\begin{align*}
    0=\partial_t\partial^{2(n-1)}_s\kgaet
     &=\partial^{2n}_s\kgaet
     +\qol^{2n+1}(\partial^{2n-2}_s\kgaet)
     -2\eps\partial^{2(n+1)}_s\kgaet
     -5\eps\kgaet^2\partial^{2n}_s\kgaet
     \\
     &\quad
     +\eps\qol^{2n+3}(\partial^{2n-1}_s\kgaet)
     +\veltan\partial^{2n-1}_s\kgaet
     \\
     &=-2\eps\partial^{2(n+1)}_s\kgaet,
\end{align*}
where we use the induction hypothesis, \eqref{eq:velocita_zero_al_bordo_bis} and 
the fact that each monomial
of $\qol^{2n+1}(\partial^{2n-2}_s\kgaet)$, $ \qol^{2n+3}(\partial^{2n-1}_s\kgaet)$ contains at least one term of the form $\partial^{2m}_s\kgaet$. Hence \eqref{eq:first_remark} holds. 

Now we compute, using \eqref{eq:evolution_curvature},
\begin{equation*}
\begin{aligned}
\partial^{2n}_s E_\eps = 
\partial^{2n-2}_s \partial_s^2 E_\eps  
=
\partial^{2n-2}_s \Big(- \partial_t \kgaet - \kgaet^2 E_\eps+ \veltan\partial_s\kappa_{\gae}\Big)
\end{aligned}
\end{equation*}

A direct computation, based on \eqref{eq:kappa_zero_al_bordo}, 
\eqref{eq:velocita_zero_al_bordo} and \eqref{eq:ipotesi_induttiva}
shows that 
$$
\partial^{2n-2}_s (\kgaet^2 E_\eps) =0 
\qquad {\textrm at} ~
\{0, \lgaet\}.
$$
Now we prove that 
\begin{equation}\label{eq:we_only_need}
0
=
\partial^{2n-2}_s  \partial_t \kgaet 
\qquad {\textrm at} ~
\{0, \lgaet\}.
\end{equation}
We notice that, due to \eqref{eq:kappa_zero_al_bordo},
\eqref{eq:velocita_zero_al_bordo}, the commutation relation
simplify to the usual one
$$
\partial_t \partial_s = \partial_s \partial_t
\qquad {\textrm at} ~
\{0, \lgaet\},
$$
and, in a similar manner, 
$$
\partial_t \partial^{2n-2}_s = \partial^{2n-2}_s \partial_t
\qquad {\textrm at} ~
\{0, \lgaet\},
$$
Thus, our thesis \eqref{eq:we_only_need} reduces to prove that
$$
0
=
\partial_t \partial^{2n-2}_s  \kgaet 
\qquad {\textrm at} ~
\{0, \lgaet\},
$$
which follows from \eqref{eq:first_remark}.
\end{proof}

Lemma \eqref{lem:E} shows also
 that

\begin{Lemma}\label{lem:derivate_pari}
For any $j \in \mathbb N$ we have 

\begin{equation}\label{eq:der_2j_kappa_zero_al_bordo}
\partial_s^{2j} \kgaet =0 \qquad {\textrm at}~
\{0, \lgaet\}
\end{equation}
\end{Lemma}}

Using Lemma \ref{lem:derivate_j_k_pari} we prove that 
\cite[Lemma 3.12]{Bellettini_Mantegazza_Novaga:04} is still valid,
since we can show that the boundary terms at any order vanish:

\begin{Proposition}
For any $t \in (0,+\infty)$  and any $j\in\NN$ we have
\begin{align*}
\frac{d}{dt} \int_{\gaet} (\ders^j \kgaet)^2 \,ds
= & -2\int_{\gaet} (\ders^{j+1} \kgaet)^2\,ds
-4\eps \int_{\gaet} (\ders^{j+2} \kgaet)^2 \,ds
\\
& +
 \int_{\gaet} \qol^{2j+4}(\ders^j\kgaet)\,ds
+ \eps \int_{\gaet} \qol^{2j+6}(\ders^{j+1}\kgaet)\,ds\,.
\end{align*}
\end{Proposition}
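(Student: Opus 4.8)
The plan is to imitate the computation in the proof of Lemma \ref{lem:dt_kappa_2}, performed now with $\ders^j\kgaet$ in place of $\kgaet$, and then to verify that every boundary term produced along the way is zero. First I would differentiate under the integral sign, using \eqref{eq:evolution_ds} for the evolution of the length element and retaining the contribution coming from the moving right endpoint, to obtain
\begin{equation*}
\frac{d}{dt}\int_{\gaet}(\ders^j\kgaet)^2\,ds
= \int_0^{\lgaet}\Big[2\,\ders^j\kgaet\,\dert\ders^j\kgaet
+ (\ders^j\kgaet)^2\big(\kgaet\,\menogradFeps+\ders\veltan\big)\Big]\,ds
+ \big(\ders^j\kgaet(\lgaet)\big)^2\,\dot\ell(\gaet).
\end{equation*}
I would then substitute the evolution equation \eqref{eq:evolution_j_derivata_curvatura} for $\dert\ders^j\kgaet$ and sort the integrand into four groups: the principal terms $2\,\ders^j\kgaet\,\ders^{j+2}\kgaet$ and $-4\eps\,\ders^j\kgaet\,\ders^{j+4}\kgaet$; the polynomial terms; the term $-5\eps\kgaet^2\ders^{j+2}\kgaet$, which carries one derivative too many; and the tangential-velocity terms.

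Next I would handle each group. Integrating the principal term $2\,\ders^j\kgaet\,\ders^{j+2}\kgaet$ by parts once yields $-2\int(\ders^{j+1}\kgaet)^2$, and integrating $-4\eps\,\ders^j\kgaet\,\ders^{j+4}\kgaet$ by parts twice yields $-4\eps\int(\ders^{j+2}\kgaet)^2$; these are exactly the two negative leading terms. The polynomial contribution $2\,\ders^j\kgaet\,\qol^{j+3}(\ders^j\kgaet)$ together with the part $-\kgaet^2(\ders^j\kgaet)^2$ coming from $\kgaet\menogradFeps$ assembles, by Definition \ref{def:q} (the exponents adding as $(j+1)+(j+3)=2j+4$), into $\int\qol^{2j+4}(\ders^j\kgaet)$. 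The remaining $\eps$-terms, namely $2\eps\,\ders^j\kgaet\,\qol^{j+5}(\ders^{j+1}\kgaet)$, the $\eps$-part $\eps(\ders^j\kgaet)^2(2\kgaet\ders^2\kgaet+\kgaet^4)$ of $\kgaet\menogradFeps$, and the result of integrating $-10\eps\kgaet^2\ders^j\kgaet\,\ders^{j+2}\kgaet$ by parts once to lower its order, all fall into $\eps\int\qol^{2j+6}(\ders^{j+1}\kgaet)$. Finally, the tangential-velocity terms recombine into a total $s$-derivative,
\begin{equation*}
2\,\veltan\,\ders^j\kgaet\,\ders^{j+1}\kgaet + (\ders^j\kgaet)^2\,\ders\veltan = \ders\big(\veltan\,(\ders^j\kgaet)^2\big),
\end{equation*}
whose integral is the pure boundary term $\big[\veltan\,(\ders^j\kgaet)^2\big]_0^{\lgaet}$.

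The main obstacle, as in the closed-curve case, is to show that all the boundary contributions vanish, and this is where Lemma \ref{lem:derivate_j_k_pari} is essential. Each boundary term generated by the integrations by parts above is of the form $\ders^a\kgaet\,\ders^b\kgaet$ (possibly multiplied by a power of $\kgaet$), where $a$ and $b$ have opposite parity; hence one of the two factors is an even-order derivative of $\kgaet$, which vanishes at $\{0,\lgaet\}$ by Lemma \ref{lem:derivate_j_k_pari} (and the factors containing $\kgaet$ itself vanish by \eqref{eq:kappa_zero_al_bordo}). The delicate point is the velocity boundary term $\big[\veltan\,(\ders^j\kgaet)^2\big]_0^{\lgaet}$, which for odd $j$ need not be killed by the parity argument: here I would use $\veltan(t,0)=0$ from \eqref{eq:vel_zero_sinistra} to discard the left endpoint, and $\veltan(t,\lgaet)=-\dot\ell(\gaet)$ from \eqref{eq:vel_tan_destra} at the right endpoint, so that this term equals $-\dot\ell(\gaet)\,(\ders^j\kgaet(\lgaet))^2$ and exactly cancels the moving-endpoint term $+(\ders^j\kgaet(\lgaet))^2\,\dot\ell(\gaet)$ retained in the first display. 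With every boundary term accounted for, what remains is precisely the claimed identity. I expect this last cancellation — the interplay of the tangential velocity at the right endpoint with the length variation — to be the only genuinely new difficulty compared with \cite[Lemma 3.12]{Bellettini_Mantegazza_Novaga:04}.
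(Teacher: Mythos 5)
Your proposal is correct and follows essentially the same route as the paper's own proof: the same differentiation via \eqref{eq:evolution_ds} and \eqref{eq:evolution_j_derivata_curvatura}, the same recombination of the tangential-velocity terms into $\partial_s\big(\veltan(\partial_s^j\kgaet)^2\big)$ with the cancellation of $\big(\partial_s^j\kgaet(\lgaet)\big)^2\dot\ell(\gaet)$ against $\veltan(t,\lgaet)=-\dot\ell(\gaet)$, and the same parity argument via Lemma \ref{lem:derivate_j_k_pari} to kill the boundary terms $\big[\partial_s^j\kgaet\,\partial_s^{j+1}\kgaet\big]_0^{\lgaet}$, $\big[\partial_s^j\kgaet\,\partial_s^{j+3}\kgaet\big]_0^{\lgaet}$ and $\big[\partial_s^{j+1}\kgaet\,\partial_s^{j+2}\kgaet\big]_0^{\lgaet}$. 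You even make explicit a bookkeeping point the paper leaves implicit (lowering the order of the $-10\eps\kgaet^2\partial_s^j\kgaet\,\partial_s^{j+2}\kgaet$ term by one integration by parts, with the resulting boundary term vanishing thanks to \eqref{eq:kappa_zero_al_bordo}).
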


\begin{proof}
Using \eqref{eq:evolution_ds}
and  \eqref{eq:evolution_j_derivata_curvatura} we deduce,
also coupling together the term containing $\lambda_\eps$
in $\partial_t \partial_s^j \kappa_{\gaet}$
and the term $\partial_s^j \kappa_{\gaet} \partial_s \lambda_\eps$, 
\begin{align*}
    \frac{d}{dt}\int_0^{\lgaet} (\partial^j_s\kgaet)^2 \,ds&=
    2\int_0^{\lgaet} \partial^j_s\kgaet\partial_t\partial^j_s\kgaet \,ds 
+\int_0^{\lgaet} (\partial^j_s\kgaet)^2 
(\kappa_{\gae}E_\eps+\partial_s\veltan) \ ds 
    \\
    &\quad + \Big(\partial^j_s\kgaet(\lgaet)\Big)^2\dot \ell(\gaet)
    \\
    &=2\int_0^{\lgaet} \partial^j_s \kgaet(\partial^{j+2}_s\kgaet+\qol^{j+3}(\partial^j_s\kgaet)) \,ds
    \\
    &\quad 
    + \eps\int_0^{\lgaet} 2\partial^j_s\kgaet
\Big(-2\partial^{j+4}_s
\kgaet-5\kgaet^2\partial^{j+2}_s\kgaet +\qol^{j+5}(\partial^{j+1}_s\kgaet)\Big) 
\,ds 
    \\
    &\quad
    -\int_0^{\lgaet} (\partial^j_s\kgaet)^2 \kgaet
\Big(\kgaet-2\eps\partial^2_s\kgaet-\eps\kgaet^3\Big) \,ds
    \\
    &\quad 
    +\int_0^{\lgaet}\partial_s(\veltan (\partial^{j}_s\kgaet)^2) \,ds
    + \Big(\partial^j_s\kgaet(\lgaet)\Big)^2\dot \ell(\gaet).
\end{align*}
Notice that \eqref{eq:velocita_zero_al_bordo_bis} and \eqref{eq:vel_tan_destra_bis} 
imply that the last line of the above expression vanishes, i.e.
\begin{align*}
    \veltan(t,\lgaet)(\partial^{j}_s\kgaet(\lgaet))^2-\veltan(t,0)
(\partial^{j}_s\kgaet(0))^2
    + \Big(\partial^j_s\kgaet(\lgaet)\Big)^2\dot \ell(\gaet)=0.
\end{align*}
Integrating by parts we deduce 
\begin{align*}
    \frac{d}{dt} \int_{\gaet} (\ders^j \kgaet)^2 \,ds
= & -2\int_{\gaet} (\ders^{j+1} \kgaet)^2\,ds
-4\eps \int_{\gaet} (\ders^{j+2} \kgaet)^2 \,ds
\\
& +
 \int_{\gaet} \qol^{2j+4}(\ders^j\kgaet)\,ds
+ \eps \int_{\gaet} \qol^{2j+6}(\ders^{j+1}\kgaet)\,ds\,
\\
& +2 
\Big[
\partial_s^j \kappa_{\gaet}
\partial_s^{j+1} \kappa_{\gaet}\Big]^{\lgaet}_0
-4 \eps
\Big[
\partial_s^j \kappa_{\gaet}
\partial_s^{j+3} \kappa_{\gaet}\Big]^{\lgaet}_0
\\
&
+4 \eps
\Big[
\partial_s^{j+1} \kappa_{\gaet}
\partial_s^{j+2} \kappa_{\gaet}\Big]^{\lgaet}_0.
\end{align*}
 Now  Lemma \ref{lem:derivate_j_k_pari} 
implies 
$$
\partial_s^j \kappa_{\gaet}
\partial_s^{j+1} \kappa_{\gaet}
= 
\partial_s^j \kappa_{\gaet}
\partial_s^{j+3} \kappa_{\gaet}
=
\partial_s^{j+1} \kappa_{\gaet}
\partial_s^{j+2} \kappa_{\gaet}=0 \qquad {\textrm at}~ \{0, \lgaet\},
$$
which ensures that the boundary term vanish. The thesis follows.
\end{proof}
The next result is proven in \cite[Proposition 3.13]{Bellettini_Mantegazza_Novaga:04}, and is also valid in our setting.
\begin{Proposition}\label{prop:higher_derivatives}
    For any $j\in\mathbb{N}$ we have the $\eps$- independent estimate, for $\eps\in(0,1)$,
    \begin{equation}\label{eq:higher_derivatives}
        \frac{d}{dt}\int_0^{\lgaet} (\partial^j_s\kgaet)^2 \ ds\leq C\bigg(\int_0^{\lgaet} \kgaet^2 \ ds\bigg)^{2j+3}+C\bigg(\int_0^{\lgaet} \kgaet^2 \ ds\bigg)^{2j+5}+C
    \end{equation}
    where the constant $C$ depends only on $1/\lgaet$.
\end{Proposition}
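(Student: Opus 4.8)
The plan is to start from the evolution identity for $\frac{d}{dt}\int_0^{\lgaet}(\ders^j\kgaet)^2\,ds$ established in the preceding Proposition, and to regard the two negative terms $-2\int(\ders^{j+1}\kgaet)^2\,ds$ and $-4\eps\int(\ders^{j+2}\kgaet)^2\,ds$ as the good quantities into which the highest-order contributions of the polynomial terms will be absorbed. It then remains to bound $\int_{\gaet}\qol^{2j+4}(\ders^j\kgaet)\,ds$ and $\eps\int_{\gaet}\qol^{2j+6}(\ders^{j+1}\kgaet)\,ds$ by the right-hand side of \eqref{eq:higher_derivatives}; both estimates follow from the same interpolation mechanism, exactly as in \cite[Proposition 3.13]{Bellettini_Mantegazza_Novaga:04}, the only genuinely new point being the need to keep every constant independent of $\eps$.

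For a single monomial $\prod_{i=1}^N\ders^{j_i}\kgaet$ of $\qol^{2j+4}(\ders^j\kgaet)$, so that $0\le j_i\le j$ and $\sum_{i=1}^N(j_i+1)=2j+4$, I would first apply Hölder's inequality with a suitable choice of exponents $p_i$ satisfying $\sum_i 1/p_i=1$, and then estimate each factor $\|\ders^{j_i}\kgaet\|_{L^{p_i}}$ by Theorem \ref{teo:Gagliardo_Nirenberg_interpolation_inequalities} with top order $j+1$, producing a leading term proportional to $\|\ders^{j+1}\kgaet\|_{L^2}^{\sigma_i}\|\kgaet\|_{L^2}^{1-\sigma_i}$ together with a lower-order $1/\lgaet$ contribution. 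Multiplying out, the product of the leading factors is $\|\ders^{j+1}\kgaet\|_{L^2}^{S}\|\kgaet\|_{L^2}^{N-S}$ with $S=\sum_i\sigma_i=\frac{2j+3-N/2}{j+1}$. A short bookkeeping shows that the constraint $\sum(j_i+1)=2j+4$ together with $j_i\le j$ forces $N\ge3$, whence $S<2$; Young's inequality then yields $\delta\int(\ders^{j+1}\kgaet)^2\,ds$ (absorbed into $-2\int(\ders^{j+1}\kgaet)^2\,ds$ by taking $\delta$ small) plus a remainder which, by an elementary scaling check, carries exactly the power $\big(\int\kgaet^2\,ds\big)^{2j+3}$, up to lower powers of $\int\kgaet^2\,ds$ coming from the $1/\lgaet$ terms. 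This is the $j=0$ content of \eqref{eq:GN_for_u^4}, carried to general order.

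The term $\eps\int\qol^{2j+6}(\ders^{j+1}\kgaet)\,ds$ is treated identically, now interpolating up to top order $j+2$: the same counting gives $N\ge3$ and Young exponent $S'=\frac{2j+5-N/2}{j+2}<2$, the leading remainder has power $\big(\int\kgaet^2\,ds\big)^{2j+5}$ (the general-order analogue of \eqref{eq:GN_for_u^6}), and the highest-order contribution is $\eps\,\delta\int(\ders^{j+2}\kgaet)^2\,ds$, absorbed into $-4\eps\int(\ders^{j+2}\kgaet)^2\,ds$; crucially the $\eps$ prefactors match, so no negative power of $\eps$ ever appears, while every remainder inherits a factor $\eps\le1$ and is therefore bounded $\eps$-uniformly. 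Finally, the $1/\lgaet$ factors are controlled through the lower bound $\lgaet\ge|\initialpoint-\finalpointeps|$ of \eqref{eq:lgaet_geq_1}, so that all constants depend only on $1/\lgaet$, and the intermediate powers of $\int\kgaet^2\,ds$ below $2j+5$ are dominated by $(\int\kgaet^2\,ds)^{2j+3}+(\int\kgaet^2\,ds)^{2j+5}+C$ via $x^m\le x^M+1$ for $0\le m\le M$; this yields \eqref{eq:higher_derivatives}. I expect the main obstacle to be precisely the $\eps$-uniform bookkeeping: one must verify that the top derivative $\ders^{j+2}\kgaet$ enters only through the factor $\eps$ matching the good term, and that the structural inequality $N\ge3$ (equivalently $S,S'<2$) holds for every monomial, which is exactly what makes the absorption, and hence the whole $\eps$-independent estimate, possible.
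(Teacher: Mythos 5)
Your proposal is correct and follows essentially the same route as the paper, which proves this statement by invoking \cite[Proposition 3.13]{Bellettini_Mantegazza_Novaga:04} applied to the evolution identity of the preceding Proposition (whose boundary terms were already shown to vanish): H\"older plus Gagliardo--Nirenberg on each monomial, absorption of the top-order pieces into $-2\int(\ders^{j+1}\kgaet)^2\,ds$ and $-4\eps\int(\ders^{j+2}\kgaet)^2\,ds$, and Young's inequality. Your bookkeeping checks out exactly: the constraints force $N\geq 3$, whence $S=\frac{2j+3-N/2}{j+1}<2$ and $S'=\frac{2j+5-N/2}{j+2}<2$, the Young remainders carry the powers $\frac{N-S}{2-S}=2j+3$ and $\frac{N-S'}{2-S'}=2j+5$ as claimed, and the matching $\eps$ prefactor on the absorbed term together with $\eps\leq 1$ on the remainder gives the $\eps$-uniformity.
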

By means of Propositions \ref{prop:uniform_estimate_of_k_square} and \ref{prop:higher_derivatives} we have then the following result.
\begin{Theorem}\label{teo316}
    For any $j\in\mathbb{N}$ there exists a smooth function $Z^j:\mathbb{R}\to (0,+\infty)$ such that
    \begin{equation}\label{eq:higher_der}
        \frac{d}{dt}\int_0^{\lgaet} 
(\partial^j_s\kgaet)^2 \ ds\leq Z^j\bigg(\int_0^{\lgaet} \kgaet^2 \ ds\bigg)
    \end{equation}
    for every $\eps\in(0,1)$.
\end{Theorem}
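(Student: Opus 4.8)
The plan is simply to combine the two preceding results: Proposition \ref{prop:higher_derivatives} already supplies the differential inequality with a polynomial right-hand side, so the only work left is to repackage that polynomial as a single smooth, strictly positive function of the quantity $\int_0^{\lgaet}\kgaet^2\,ds$, after making every constant genuinely independent of $\eps$ and of $t$.

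First I would invoke Proposition \ref{prop:higher_derivatives}, which for each fixed $j$ gives
\begin{equation*}
\frac{d}{dt}\int_0^{\lgaet}(\partial^j_s\kgaet)^2\,ds \leq C\Big(\int_0^{\lgaet}\kgaet^2\,ds\Big)^{2j+3}+C\Big(\int_0^{\lgaet}\kgaet^2\,ds\Big)^{2j+5}+C,
\end{equation*}
where $C$ depends only on $1/\lgaet$. The decisive point is to convert this $1/\lgaet$-dependence into a uniform bound. By \eqref{eq:lgaet_geq_1} we have $\lgaet\geq|\initialpoint-\finalpointeps|>0$ for all $t$ and all $\eps\in(0,1]$, hence $1/\lgaet\leq 1/|\initialpoint-\finalpointeps|$. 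Since the constant in Proposition \ref{prop:higher_derivatives} is a nondecreasing (in fact polynomial) function of $1/\lgaet$, it can be replaced by a single constant $C=C(\inidat,j)$, uniform over $\eps\in(0,1]$ and over $t\in(0,+\infty)$.

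Next I would absorb the odd-degree polynomial into a smooth even majorant. Writing $x=\int_0^{\lgaet}\kgaet^2\,ds\geq 0$, I use the elementary bounds $x^{2j+3}\leq 1+x^{2j+4}$ and $x^{2j+5}\leq 1+x^{2j+6}$, valid for every $x\geq 0$ (split according to $x\leq 1$ or $x\geq 1$). Substituting yields
\begin{equation*}
C x^{2j+3}+C x^{2j+5}+C \leq 3C + C x^{2j+4}+C x^{2j+6}=:Z^j(x).
\end{equation*}
The function $Z^j(x)=3C+Cx^{2j+4}+Cx^{2j+6}$ contains only even powers, so it is a polynomial, hence smooth on all of $\mathbb{R}$, and it is bounded below by $3C>0$, so indeed $Z^j:\mathbb{R}\to(0,+\infty)$. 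Evaluating at $x=\int_0^{\lgaet}\kgaet^2\,ds$, which is nonnegative, produces \eqref{eq:higher_der} and closes the argument.

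I expect no serious obstacle: the whole content is bookkeeping on top of Proposition \ref{prop:higher_derivatives}. The one genuinely indispensable ingredient is the uniform lower bound on the length \eqref{eq:lgaet_geq_1}, which is exactly what lets the $1/\lgaet$-dependent constant be frozen into an $\eps$- and $t$-independent constant, and this is where the fixed, distinct Dirichlet endpoints $\initialpoint\neq\finalpointeps$ enter. The only other point requiring a moment's care is ensuring that the dominating function is smooth and strictly positive on the whole real line rather than merely on $[0,+\infty)$, which is why I replace the odd exponents $2j+3$ and $2j+5$ by the even ones $2j+4$ and $2j+6$.
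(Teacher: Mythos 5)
Your proposal is correct and follows essentially the same route as the paper, whose proof is the one-line observation that Proposition \ref{prop:higher_derivatives} combined with the lower length bound \eqref{eq:lgaet_geq_1} (which makes $1/\lgaet$, and hence the constant $C$, uniformly controlled) gives the claim. Your additional bookkeeping --- replacing the odd powers $2j+3$, $2j+5$ by even majorants so that $Z^j$ is smooth and strictly positive on all of $\mathbb{R}$ --- is a detail the paper leaves implicit, and your observation that the distinct Dirichlet endpoints $\initialpoint\neq\finalpointeps$ are what make \eqref{eq:lgaet_geq_1} a genuine positive lower bound is exactly the right point to flag.
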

\begin{proof}
    The statement follows by Propositions \ref{prop:uniform_estimate_of_k_square} and \ref{prop:higher_derivatives}, since by \eqref{eq:lgaet_geq_1} the quantity $1/\lgaet$ is controlled.
\end{proof}
\bigskip
We are now ready to prove Proposition \ref{prop:uniform_estimate_of_higher_derivatives_square}.

\textit{Proof of Proposition \ref{prop:uniform_estimate_of_higher_derivatives_square}.}
From
\begin{equation*}
\int_0^{\lgaet} (\partial^j_s\kappa_{\gaet})^2~ds
=\int_0^t \frac{d}{dt}\int_0^{\ell (\gae(\tau))} 
(\partial^j_s\kappa_{\gae(\tau)})^2 \ ds \ d\tau 
+ \int_0^{\ell(\initialcurve)} 
(\partial_s^j \kappa_{\initialcurve})^2 \ ds,
\end{equation*}
using \eqref{eq:higher_der} and the smoothness of $\initialcurve$, we get
\begin{align*}
\int_0^{\lgaet} (\partial^j_s\kappa_{\gaet})^2~ds
\leq 
\int_0^t Z^j\bigg(\int_0^{\ell(\gae(\tau))} \kappa^2_{\gae(\tau)} \ ds\bigg)+ C,
\end{align*}
where $C$ is a positive constant independent of $\eps$ and depending only on $\inidat$.
Now we conclude by applying \eqref{eq:uniform_estimate_of_k_square}.
\qed
\begin{Remark}
As a direct consequence of Proposition \ref{prop:uniform_estimate_of_higher_derivatives_square}, we deduce that for every $T\in(0,\Tmax)$ there exists a constant \( C > 0 \), depending on $T$ and  \(\inidat \), such that
\begin{equation}\label{eq:vel_perp}
  \sup_{\eps\in(0,1]}  |E_\eps(t,x)| 
\leq C \quad \forall (t,x) \in [0,T] \times [0,1].
\end{equation}

Indeed, Proposition \ref{prop:uniform_estimate_of_higher_derivatives_square} ensures that all spatial derivatives of the curvature are uniformly bounded in \( L^2 \) in $[0,T]$.
Therefore, for every \( n \in \mathbb{N} \),
\[
\Vert \kappa_{\gae}(t,\cdot) \Vert_{W^{n,2}([0,\lgaet])} < C(n) \qquad \forall t\in[0,T].
\]
Hence, by Sobolev embedding, there exists a constant \( \hat{C}(n) > 0 \) such that
\[
\Vert \partial^n_s \kappa_{\gae}(t,\cdot) \Vert_{L^\infty([0,\lgaet])} \leq \hat{C}(n).
\]
Finally, recalling the expression of \( E_\eps \) (see \eqref{eq:energy_eps}), which depends on \( \kgaet \), \( \partial_s^2 \kgaet \), and \( \kgaet^3 \), estimate \eqref{eq:vel_perp} directly follows.
\end{Remark}
\begin{Remark}
A further consequence of \eqref{eq:upper_bound_on_the_length} and \eqref{eq:uniform_estimate_of_k_square} is that there exists a constant $C>0$ 
independent of $\eps$ such that for any $t \in [0,\Tmax)$ 
and any $\eps \in (0,1]$,
\begin{equation*}\label{eq:derivative_of_the_length}
\begin{aligned}
    \frac{d}{dt}\lgaet\leq &
C\bigg(\int_0^{\lgaet} \kappa_{\gaet}^2 ds\bigg)^3+\frac{C}{\lgaet}\bigg(\int_0^{\lgaet} 
\kappa_{\gaet}^2 ds\bigg)^2\leq C.
\end{aligned}
\end{equation*}
Indeed, using equations \eqref{eq:evolution_ds}, \eqref{eq:velocita_zero_al_bordo_bis}, \eqref{eq:vel_tan_destra_bis} and integrating by parts
we get 
\begin{align*}
        \frac{d}{dt}\ell(\gaet)&=\frac{d}{dt}\int_0^{\lgaet} 1 \ ds
=  
\int_0^{\lgaet}\Big(
E_\eps \kgaet 
+\partial_s\veltan\Big) \,ds \,+ \dot\ell(\gaet) \\
&=\int_0^{\lgaet}
\Big(-\kgaet^2+2\varepsilon \kgaet\partial^2_s \kgaet
+\varepsilon \kgaet^4 \Big)\ ds 
\\
& \ \ \ \ +\veltan(t,\lgaet)-\veltan(t,0)+ \dot\ell(\gaet)
\\
&=\int_0^{\lgaet}
\Big(-\kgaet^2- 2\varepsilon (\partial_s \kgaet)^2+\varepsilon \kgaet^4
\Big) \ ds 
+
2\eps
\Big[
\kgaet\partial_s\kgaet\Big]^{\lgaet}_0.
\end{align*}
Using the boundary conditions on \(\kgaet\) in
\eqref{eq:eps_dir_boundary_conditions} and \eqref{eq:GN_for_u^4},
we find 
\begin{align*}
\frac{d}{dt}\ell(\gaet)&\leq
\int_0^{\lgaet}\Big(
 -\kgaet^2-2\varepsilon (\partial_s \kgaet)^2 +\varepsilon(\partial_s \kgaet)^2 
\Big)~ds \\
&\quad  
+\eps C\bigg(\int_0^{\lgaet} \kgaet^2 ds\bigg)^3+
\eps
\frac{C}{\lgaet} \bigg(\int_0^{\lgaet} \kgaet^2 ds\bigg)^2
\\
&\leq 
 C\bigg(\int_0^{\lgaet} \kgaet^2 ds\bigg)^3+
\frac{C}{\lgaet} \bigg(\int_0^{\lgaet} \kgaet^2 ds\bigg)^2,
\end{align*}
   since
$\varepsilon \in (0,1]$. We conclude using \eqref{eq:lgaet_geq_1} and \eqref{eq:uniform_estimate_of_k_square}.
\qed
\end{Remark}
\subsection{\texorpdfstring{\(\eps\)-uniform estimate 
of the tangential velocity}{epsilon-uniform estimate of tangential velocity}}
From the expression \eqref{eq:velocita_tangenziale} we obtain
\begin{equation*}
\begin{aligned}
    |\veltan(t,s)| 
    &= \left| -\int_0^s E_\eps \kgaet \, d\sigma \right| 
    \leq 
    \Vert E_\eps \Vert_{L^2([0,\lgaet])} \, \Vert \kgaet \Vert_{L^2([0,\lgaet])} \\
    &\leq 
    \lgaet \, \Vert E_\eps \Vert_{L^\infty([0,\lgaet])} \, \Vert \kgaet \Vert_{L^2([0,\lgaet])}.
\end{aligned}
\end{equation*}
Thus, using   
 \eqref{eq:upper_bound_on_the_length}, \eqref{eq:uniform_estimate_of_k_square} and \eqref{eq:vel_perp},  we deduce that for every $T\in(0,\Tmax)$ there exists a constant \( C > 0 \), 
depending on $T$ and  \(\inidat \), such that
\begin{equation}\label{eq:vel_tan}
   \sup_{\eps\in(0,1]} |\veltan(t,x)| \leq C \quad \forall (t,x) \in [0,T] \times [0,1].
\end{equation}
\nada{We now show that also \( \|E_\eps\|_{L^2[0,s]} \) is uniformly bounded with respect to \( \eps \). Recalling the definition of \( E_\eps \), we compute
\begin{align*}
\|E_\eps\|_{L^2[0,s]}^2 &= \int_0^{s} E_\eps^2 \, d\sigma \\
&= \int_0^{s} \left( -\kgaet + 2\eps \partial_s^2 \kgaet + \eps \kgaet^3 \right)^2 \, d\sigma \\
&\leq 3 \int_0^{s} \left( \kgaet^2 + 4\eps^2 (\partial_s^2 \kgaet)^2 + \eps^2 \kgaet^6 \right) \, d\sigma \\
&= 3 \|\kgaet\|_{L^2[0,s]}^2 + 12\eps^2 \|\partial_s^2 \kgaet\|_{L^2[0,s]}^2 + 3\eps^2 \|\kgaet\|_{L^6[0,s]}^6,
\end{align*}
where we have used that in general $(a+b+c)^2\leq 3(a^2+b^2+c^2)$.

As we have already said, the first term is uniformly bounded. For the second term, we invoke Proposition \ref{prop:uniform_estimate_of_higher_derivatives_square}, which gives the uniform boundedness of \( \|\partial_s^2 \kgaet\|_{L^2[0,s]} \). 

For the third term, we apply \eqref{eq:GN_for_u^6},
which implies that \( \|\kgaet\|_{L^6[0,s]}^6 \) is uniformly bounded as well, since both \( \|\partial_s^2 \kgaet\|_{L^2[0,s]} \) and \( \|\kgaet\|_{L^2[0,s]} \) are.

Hence, we conclude that
\[
\|E_\eps\|_{L^2[0,s]} \leq C,
\]
for a constant \( C \) independent of \( \eps \). Thus, we have obtained the existence of a positive constant $C$ independent of $\eps$, such that 
\begin{equation}\label{eq:vel_tan}
    |\veltan(t,s)|<C \quad \forall(t,s)\in \bigcup_{t \in [0,\Tmax)}
\{t\} \times [0,\lgaet].
\end{equation}}

\nada{The $\eps$- uniform control 
\begin{equation}
    |\partial^k_t\gae|\leq C
\end{equation}
follows by using the evolution equation and Proposition \ref{prop:uniform_estimate_of_k_square},\ref{prop:uniform_estimate_of_higher_derivatives_square}.}
\section{Proof of Theorem \ref{teo:asymptotic_convergence}}
Let $g_0>0$ and
$Z:(0, +\infty) \to (0,+\infty)$ be a smooth function, for which we assume $$\liminf_{x\rightarrow \infty}Z(x)>0,$$ so that 
the maximal 
forward solution to the Cauchy problem
\begin{equation}\label{cauchy}
	\begin{cases}
		g'(t)=Z(g(t)),
\\
		g(0)=g_0, 
	\end{cases}
\end{equation}
is defined on $[0,a)$, 
 for some   
$a \in (0,+\infty)\cup\{+\infty\}$.
Write  $g([0,a)) = [g_0,+\infty)$.
Function $g$ is continuous and invertible since $g'>0$,  
therefore its inverse
$g^{-1}:[g_0, +\infty) \to [0,a)$ is continuous and strictly increasing.

\begin{Lemma}[\textbf{Doubling time}]\label{lem:ODE}
There exists a 
continuous function $\Theta:(0,+\infty)\to(0,+\infty)$
such that for every $T 
\in [0,a)$ it holds $T+\Theta(g(T))<a$, and for all $t\in[T, T+\Theta(g(T))]$ we have $g(t)\leq 2g(T)$.
\end{Lemma}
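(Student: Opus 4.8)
The plan is to read off the doubling time directly from the ODE by separation of variables, obtaining an explicit formula for $\Theta$ that depends only on the value $g(T)$ and not on $T$ itself; this independence is precisely what makes the lemma useful for iterating along the flow. Since $g'=Z(g)>0$, the function $g$ is a strictly increasing $C^1$ bijection of $[0,a)$ onto $[g_0,+\infty)$, with continuous strictly increasing inverse $g^{-1}\colon[g_0,+\infty)\to[0,a)$. I would take as candidate
$$
\Theta(y):=\int_y^{2y}\frac{dz}{Z(z)}, \qquad y\in(0,+\infty).
$$
Because $Z$ is smooth and strictly positive, $1/Z$ is continuous and positive on $(0,+\infty)$, so this integral over the compact interval $[y,2y]$ is finite and positive; hence $\Theta$ is well-defined, continuous, and strictly positive on all of $(0,+\infty)$, which settles the domain and regularity requirements at once.

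Next I would fix $T\in[0,a)$ and set $y:=g(T)\in[g_0,+\infty)$. Using the substitution $z=g(\sigma)$, so that $dz=Z(g(\sigma))\,d\sigma$, one has for every $t\in[0,a)$
$$
t-T=\int_{g(T)}^{g(t)}\frac{dz}{Z(z)}.
$$
Since $g([0,a))=[g_0,+\infty)$ and $2y\ge 2g_0>g_0$, the value $2y$ lies in the range of $g$, so there is a unique $t^\ast=g^{-1}(2y)\in[0,a)$ with $g(t^\ast)=2y$. Evaluating the identity above at $t=t^\ast$ gives $t^\ast-T=\Theta(y)$, that is
$$
T+\Theta(g(T))=g^{-1}\big(2g(T)\big)\in[0,a),
$$
which in particular is strictly less than $a$; this is the first claim. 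Equivalently, $\Theta(y)=g^{-1}(2y)-g^{-1}(y)$ for $y\ge g_0$, which exhibits $\Theta(g(T))$ as exactly the time needed for $g$ to double starting from $T$.

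For the second claim I would simply invoke the monotonicity of $g$: on the interval $[T,T+\Theta(g(T))]=[T,g^{-1}(2g(T))]$ it yields $g(t)\le g\big(g^{-1}(2g(T))\big)=2g(T)$ for every such $t$.

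There is no genuinely hard step here; the only points deserving care are (i) to ensure that $\Theta$ is a function of the value $g(T)$ alone rather than of $T$, which the separation-of-variables formula guarantees, and (ii) to check that $2g(T)$ always remains inside the range $[g_0,+\infty)$ of $g$, which holds because $g(T)\ge g_0$ for all $T\in[0,a)$. The argument $y$ of $\Theta$ thus effectively ranges only over $[g_0,+\infty)$, but the formula makes sense for all $y\in(0,+\infty)$, so the stated domain is covered without any ad hoc extension.
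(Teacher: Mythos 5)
Your proof is correct, and it rests on the same core identity as the paper's: the doubling time is $\Theta(y)=g^{-1}(2y)-g^{-1}(y)$, and the conclusion follows from strict monotonicity of $g$ plus the fact that $2g(T)$ lies in the range $[g_0,+\infty)$. Where you genuinely diverge is in how the domain $(0,+\infty)$ is covered. The paper first defines $\Theta(s)=g^{-1}(2s)-g^{-1}(s)$ only on $[g_0,+\infty)$, and then extends it to all of $(0,+\infty)$ by re-running the construction for the Cauchy problems with initial data $1/n$, invoking uniqueness of solutions to check that the resulting functions $\Theta_n$ agree on overlaps and glue consistently. Your separation-of-variables formula
\[
\Theta(y)=\int_y^{2y}\frac{dz}{Z(z)}
\]
short-circuits that step: it is manifestly well defined, positive, and continuous (indeed smooth) on all of $(0,+\infty)$ at once, because $1/Z$ is continuous and positive there, and it visibly depends on the value $g(T)$ alone. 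You then recover the paper's identity $\Theta(y)=g^{-1}(2y)-g^{-1}(y)$ for $y\ge g_0$ by the substitution $z=g(\sigma)$, which is precisely the ``explicit expression of $g^{-1}$'' the paper alludes to but never writes down. What your version buys is economy and transparency — no auxiliary family of solutions, no gluing, and continuity of $\Theta$ for free; what the paper's version buys is independence from any integral representation, so it would survive in settings where $Z$ is merely continuous and one prefers to argue purely from monotonicity and uniqueness. For the lemma as stated, your route is the tighter one.
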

\begin{proof}
We start to define
\begin{equation*}
\Theta(s):=g^{-1}(2s)-g^{-1}(s), \qquad s \in 
[g_0, +\infty),
\end{equation*}
which a positive continuous function.
Fix $T\in [0,a)$ and set $s:=g(T)$.  
By definition of $\Theta$ we have
$T+\Theta(s)=g^{-1}(2s)$.
Hence for any $t\in[T,\,T+\Theta(s)]$ we have
$t\in[g^{-1}(s),\,g^{-1}(2s)]$,
and by monotonicity of $g$,
$$
g(t)\in[g(g^{-1}(s)),\,g(g^{-1}(2s))]=[s,\,2s].
$$
Thus $g(t)\le 2s=2g(T)$. This argument shows the existence
of a function $\Theta$ defined on $[g_0, +\infty)$. 
Now, for any $n\in \NN$, the previous construction 
with $1/n$ in place of $g_0$ provides a function $\Theta_n$ defined on $[1/n, +\infty)$, 
and by the uniqueness of the solutions of \eqref{cauchy}
and the explicit expression of
$g^{-1}$,
if $m < n$, we have $\Theta_m = \Theta_n$ on 
$[1/m, +\infty)$. 
This proves the existence of a function $\Theta$ defined on 
the whole of $(0,+\infty)$ and satisfying the required properties.
\end{proof}

We will now show Theorem \ref{teo:asymptotic_convergence}. Recall that $\Tmax$
is defined in Proposition \ref{prop:uniform_estimate_of_k_square}
and $\Tmax = \sup \{ T > 0 : \zogae \to \zoga \text{ in } C^\infty((0,T] \times [0,1])\}$
 and that 
$\Tsingmcf$  is the first singularity time for $\zoga$.

\medskip
\textit{Proof of Theorem \ref{teo:asymptotic_convergence}}.
From \eqref{eq:evol_eq_completa}, 
\eqref{eq:vel_perp} and \eqref{eq:vel_tan}, we know that for every \( T\in(0,\Tmax) \)  there exists a constant \( C > 0 \), independent of \( \eps \), such that
\begin{equation*}
	\bigg|\frac{\partial
\zogae(t,x)
}{\partial t}\bigg|\leq|E_\eps(t,x)|+|\veltan(t,x)| \leq C 
	\quad \forall(t,x)\in [0,T]
	\times [0,1], \quad  \forall \eps\in(0,1].
\end{equation*}
Moreover, 
the constant-speed parametrization $\zogae(t,\cdot)$
over $[0,1]$ of $\gae(t,\cdot)$ does not degenerate, 
due to the lower bound on the length \eqref{eq:lgaet_geq_1} 
(and to the upper bound \eqref{eq:upper_bound_on_the_length}).

Hence, by the Ascoli-Arzel\`a's theorem in time-space, up to a 
not relabelled subsequence, 
the immersions $\zogae$ uniformly converge, as $\eps\to 0^+$, to some 
continuous map $\hat\zoga:[0,T]\times[0,1]\to \mathbb{R}^2$.

From \eqref{eq:uniform_estimate_of_higher_derivatives_square} we have $\eps$-uniform bounds on 
$\kgaet$ and all its derivatives with respect to $s$, and thus, 
from equation \eqref{eq:eps_dir_motion_equation}, also 
on all its derivatives with respect to time 
of any order. Thus 
the convergence of $\zogae$ is in $C_{\mathrm{loc}}^\infty((0,T] \times [0,1])$ 
and 
 passing to the limit in \eqref{eq:eps_dir_motion_equation}, 
\eqref{eq:eps_dir_boundary_conditions}, 
it follows  that $\hat\zoga$ satisfies \eqref{eq:initial_bdry_value_prb_intro}. 
Since the solution of \eqref{eq:initial_bdry_value_prb_intro} is unique, all the sequence $(\zogae)$ converges to $\hat{\zoga}$ which hence coincides with $\zoga$. 
Notice that $\hat \zoga$ is smooth in $(0,T]$, 
so that 
$$
T_{\max} \leq
\Tsingmcf.
$$
We aim to show 
$\Tmax=\Tsingmcf$, namely that
$\Tmax\geq \Tsingmcf$.
Assume by contradiction that 
$$
\Tmax<\Tsingmcf.
$$
Then, by the smoothness 
of $\zoga$ on $(0, \Tsingmcf)$, we have 
\begin{equation}\label{eq:T_max}
\zoga(t,\cdot) \to \zoga(\Tmax,\cdot) \text{ in } C^\infty([0,1]) \text{ as } t \to \Tmax^-.
\end{equation} 

Set 
$$
g_0:=\int_0^{\ell(\inidat)}\kappa_{\inidat}^2 \ ds,
$$
and consider the solution $g$ of the Cauchy problem \eqref{cauchy} 
with 
$Z$ given by the right hand side of \eqref{eq:k^2}, namely 
$$
Z(p) := C p^5 + C p^3 + C p^2, \qquad p >0.  
$$
Let $\Theta: (0,+\infty) \to (0,+\infty)$ be given by 
Lemma~\ref{lem:ODE}.
By \eqref{eq:T_max} and the continuity of $\Theta$ 
we get
\begin{align*}
	 \lim_{t\to \Tmax^-} \Theta\left( \int_0^{\ell(\ga(t))} \kappa_{\ga(t)}^2 \, ds \right) = \Theta\left( \int_0^{\ell(\ga(\Tmax))} \kappa_{\ga(\Tmax)}^2 \, ds \right)=:\theta>0.
\end{align*}
Therefore we can pick 
$$
\tempo\in[T_{\max}-\theta/4,\,T_{\max})
$$
such that
\begin{equation*}
\Theta\!\Big(\int_0^{\ell(\gamma(\tempo))}\kappa_{\gamma(\tempo)}^2\,ds\Big)\ge \tfrac{2}{3}\theta.
\end{equation*}
Our aim is to show
the convergence 
\begin{equation}\label{eq:our_aim_is}
\zogae(t, \cdot) \to \zoga(t, \cdot) \quad \textrm{in} \quad   C^\infty([0,1]) 
\quad \textrm{for}\quad  t \in  [\tempo, \tempo + \theta/2].
\end{equation}
 As $\tempo
+\theta/2\geq \Tmax-\theta/4+\theta/2>\Tmax$, we will get the contradiction.

By the convergence $\zogae(\tempo,\cdot)\to\zoga(\tempo,\cdot)$ in $C^\infty([0,1])$ as $\eps\to0^+$ we have
\begin{equation*}
\lim_{\eps\to0^+}\int_0^{\ell(\gamma_\eps(\tempo))}\kappa_{\gamma_\eps(\tempo)}^2\,ds
= \int_0^{\ell(\gamma(\tempo))}\kappa_{\gamma(\tempo)}^2\,ds.
\end{equation*}
Hence there exists $\overline\eps>0$ such that 
\begin{equation}\label{eq:Theta_two}
\Theta\!\Big(\int_0^{\ell(\gamma_\eps(\tempo))}\kappa_{\gamma_\eps(\tempo)}^2\,ds\Big)\ge \tfrac{\theta}{2} \qquad 
\forall \eps\in(0,\overline\eps].
\end{equation}
Let us now define
\begin{equation}\label{eq:h,tildaT}
h_\infty := \sup_{\eps \in (0,\overline\eps]} \int_0^{\ell(\gae(\tempo))} \kappa_{\gae(\tempo)}^2 \, ds,
\end{equation}
which is finite. Continuity of $\Theta$ and 
\eqref{eq:Theta_two} imply
\begin{equation}\label
{eq:Theta_h_ge_theta2}\Theta(h_\infty) \ge \tfrac{\theta}{2}.
\end{equation}
Set
$$
T_\infty := g^{-1}(h_\infty) >0.
$$
By \eqref{eq:Theta_h_ge_theta2} 
we know $\Theta(g(T_\infty))=\Theta(h_\infty)\ge \theta/2$,
so that 
\begin{equation}\label{eq:useful_inclusion}
[\tempo,\;\tempo+\Theta(g(T_\infty)))
\supseteq
[\tempo,\,\tempo+\theta/2].
\end{equation}
Recall (see \eqref{eq:higher_der} with $j=0$ and $Z = Z^0$) that 
\begin{equation}\label{eq:diff_ineq}
\partial_t\Big(\int_0^{\ell(\gamma_\varepsilon(t))}\kappa_{\gamma_\varepsilon(t)}^2\,ds\Big)
\le Z\!\Big(\int_0^{\ell(\gamma_\varepsilon(t))}\kappa_{\gamma_\varepsilon(t)}^2\,ds\Big)
\qquad \forall \eps \in (0,1], \ 
\ \  \forall t>0.
\end{equation}

By the definition of $h_\infty$ in \eqref{eq:h,tildaT} we have
\begin{equation}\label{eq:initial_le_h}
\int_0^{\ell(\gamma_\varepsilon(\tempo))}\kappa_{\gamma_\varepsilon(\tempo)}^2\,ds \le h_\infty
\qquad
\forall \varepsilon\in(0,\overline\varepsilon].
\end{equation}
If $\tau\in[\tempo,\;\tempo+\Theta(h_\infty))$ then
\begin{equation*}
\tau-(\tempo-T_\infty)\in[T_\infty,\;T_\infty+\Theta(h_\infty))   \cap [0,a).
\end{equation*}
Define 
\begin{equation*}
\sigma(\tau) := g\big(\tau-(\tempo-T_\infty)\big) \qquad \forall
\tau \in 
[\tempo,\;\tempo+\Theta(h_\infty)).
\end{equation*}
Then
\begin{equation}\label{eq:sigma_ode}
\begin{cases}
\sigma'(\tau)=g'(\tau-(\tempo-T_\infty))=Z\big(g(\tau-(\tempo-
T_\infty))\big)=Z(\sigma(\tau)), &{}
\\
\sigma(\tempo)=g(T_\infty)=h_\infty. &{}
\end{cases}
\end{equation}

By \eqref{eq:initial_le_h} we have 
\begin{equation}\label{eq:compa}
\int_0^{\ell(\gamma_\varepsilon(\tempo))}
\kappa_{\gamma_\varepsilon(\tempo)}^2\,ds\le\sigma(\tempo)
\qquad \forall
\varepsilon\in(0,\overline\varepsilon]. 
\end{equation}
Therefore, applying the comparison principle for ODEs 
in $[\tempo, \tempo + \Theta(h_\infty))$ between:
\begin{itemize}
\item[-]  the function $t \in 
(0,+\infty)\mapsto \int_0^{\ell(\gamma_\varepsilon(t))}
\kappa_{\gamma_\varepsilon(t)}^2\,ds$, 
which satisfies \eqref{eq:diff_ineq} (hence is a subsolution of \eqref{eq:sigma_ode}), 
and
\item[-] $\sigma$, solution of \eqref{eq:sigma_ode} with the same 
initial condition at $\tau=\tempo$, 
\end{itemize}
we obtain: for every $\varepsilon\in(0,\overline\varepsilon]$ and 
every $\tau\in[\tempo,\;\tempo+\Theta(h_\infty))$,
\begin{equation}\label{eq:comparison_final}
\int_0^{\ell(\gamma_\varepsilon(\tau))}\kappa_{\gamma_\varepsilon(\tau)}^2\,ds
\le \sigma(\tau)=g\big(\tau-(\tempo-T_\infty)\big).
\end{equation}

Combining \eqref{eq:comparison_final} 
with the bound 
\begin{equation*}
g(t)\le 2g(T_\infty)
\qquad\text{for every } t\in[T_\infty,\;T_\infty+\Theta(h_\infty)) \cap [0,a)
\end{equation*}
given by Lemma \ref{lem:ODE}
(which 
holds for the argument $\tau-
(\tempo-T_\infty)\in[T_\infty,T_\infty+\Theta(h_\infty))$) yields
\begin{equation}\label{eq:previous}
\int_0^{\ell(\gamma_\varepsilon(\tau))}\kappa_{\gamma_\varepsilon(\tau)}^2\,ds
\le g\big(\tau-(\tempo-T_\infty)\big)\le 2g(T_\infty)=2h_\infty
\qquad
\forall \varepsilon\in(0,\overline\varepsilon], \ \ 
\forall \tau\in
[\tempo,\;\tempo+\Theta(h_\infty)).
\end{equation}

Hence, according to \eqref{eq:useful_inclusion},
inequality \eqref{eq:previous}
holds also on the time interval 
$[\tempo,\,\tempo+\theta/2]$. 
 Arguing as in the proof of Proposition \ref{prop:uniform_estimate_of_higher_derivatives_square}, 
we deduce uniform bounds for all $\Vert \partial^j_s\kappa_{\gae(\tau)}\Vert_{L^2}$, for 
every \( j \in \mathbb{N} \), in the same time interval. 
This yields \eqref{eq:our_aim_is},
and this gives 
the contradiction. When 
$\inidat \in C^\infty([0, \ell(\inidat)])$ and
 all derivatives of $\inidat$ of even order 
at $0$ and $\ell(\inidat)$ vanish, all previous estimates
are valid including the initial time $t=0$,
and
the convergence
takes place 
in 
$C_{\mathrm{loc}}^\infty([0,\Tsingmcf)\times[0,1])$.
\qed

\paragraph*{Acknowledgements.} We wish to thank Alessandra Pluda for stimulating discussions.
All authors are members of the Gruppo Nazionale 
per l’Analisi Matematica, la Probabilità e le loro 
Applicazioni (GNAMPA) of the Istituto Nazionale di Alta Matematica (INdAM).
The work of M.N. was partially supported by Next Generation EU, 
PRIN 2022E9CF89.
R.S. joins the project 2025 CUP E5324001950001. We also acknowledge the partial financial support of PRIN 2022PJ9EFL ``Geometric Measure Theory: Structure of Singular
Measures, Regularity Theory and Applications in the Calculus of Variations”. The latter has
been funded by the European Union under NextGenerationEU. Views and opinions expressed
are however those of the author(s) only and do not necessarily reflect those of the European
Union or The European Research Executive Agency. Neither the European Union nor the
granting authority can be held responsible for them.
\bibliographystyle{plain}
\bibliography{refs_dirichlet_rivisto}
\end{document}